\newcommand{\pder}[2][]{\frac{\partial #1}{\partial #2}}
\def\reals{\mathbb R}
\def\p{\partial}
\def\WS{{\mathcal W}}
\def\deg{{M_{\text{D}}}}
\def\mleg{{M_{\text{L}}}}
\def\beq{\begin{equation}}
\def\eeq{\end{equation}}
\def\meq{{M_{\text{E}}}}
\def\bxi{\boldsymbol{\xi}}
\newcommand\avg[1]{\overline{#1}}
\newcommand\R{\mathbb{R}}
\newcommand{\Tm}{\mathcal{T}}
\newcommand{\pd}[2]{\frac{\partial #1}{\partial #2}}            
\newcommand{\pdn}[3]{{\frac{\partial^{#1}#2}{\partial#3^{#1}}}} 
\newcommand{\dt}{\Delta t}
\newcommand{\BigOh}{\mathcal{O}}
\newcommand{\eps}{\varepsilon}              
\newcommand{\E}{\mathcal{E}}
\newtheorem{rmk}{Remark}
\newtheorem{lem}{Lemma}
\newcommand{\hf}{\frac{1}{2}}
\newcommand{\En}{\mathcal{E}}
\providecommand{\norm}[1]{\lVert#1\rVert}
\title{\center Positivity-preserving discontinuous Galerkin methods with Lax-Wendroff time discretizations}
\author[1]{\normalsize Scott A. Moe}
\author[2]{James A. Rossmanith}
\author[3]{David C. Seal}
\affil[1]{\it University of Washington, Department of Applied Mathematics, Seattle, WA 98195, USA ({\tt smoe@uw.edu})}
\affil[2]{\it Iowa State University,
      Department of Mathematics,
       396 Carver Hall, Ames, IA 50011, USA ({\tt rossmani@iastate.edu})}
\affil[3]{\it U.S. Naval Academy,
    Department of Mathematics,
	121 Blake Road,
	Annapolis, MD 21402, USA ({\tt seal@usna.edu})}
\begin{document}




\maketitle

\begin{abstract}

This work introduces a single-stage, single-step method for the compressible
Euler equations that is provably positivity-preserving and can be applied
on both Cartesian and unstructured meshes.  This method
is the first case of a 
single-stage, single-step method that is simultaneously
high-order, positivity-preserving, and operates on unstructured meshes.
Time-stepping is accomplished via the Lax-Wendroff approach, which
is also sometimes called the Cauchy-Kovalevskaya procedure,
where temporal derivatives in a Taylor series in time are exchanged for spatial derivatives.
The Lax-Wendroff discontinuous Galerkin (LxW-DG) method developed in this work
is formulated so that it looks like a forward Euler update but with a high-order 
time-extrapolated flux. 
In particular, the numerical flux used in this work is a linear combination of a low-order positivity-preserving contribution and a high-order component that can be damped to enforce positivity of the cell averages for the density and
pressure for each time step. In addition to this flux limiter, 
a moment limiter is
applied that forces positivity of the solution at finitely many quadrature
points within each cell. The combination of the flux limiter and the
moment limiter guarantees positivity of the cell averages from one time-step
to the next. Finally, a simple shock capturing limiter that uses
the same basic technology as the moment limiter is introduced
in order to obtain non-oscillatory results. The resulting scheme
can be extended to arbitrary order without increasing the size of
the effective stencil.  We present numerical results in one and two space dimensions 
that demonstrate the robustness of the proposed scheme.


\end{abstract}





\section{Introduction}
\label{sec:introduction}

\subsection{\it Governing equations}

The purpose of this work is to develop a positivity-preserving version of 
the Lax-Wendroff discontinuous Galerkin method for
the compressible Euler equations on unstructured meshes.  The compressible Euler equations 
form a system of hyperbolic conservation law that can be written as follows:
\begin{equation}
\label{eqn:euler-eqns}
\left( 
\begin{array}{c}
    \rho \\ \rho \vec{u} \\ \E
    \end{array}
    \right)_{,t}
    + 
    \nabla_{\bf x} \cdot
    \left( 
    \begin{array}{c}
       \rho \vec{u} \\ \rho \norm{ \vec{u} }^2 + p \\ ( \E + p ) \vec{u}
    \end{array}
    \right)
    = 0.
\end{equation}
%
The {\it conserved} variables are the mass density, $\rho$, the momentum density, $\vec{M}
= \rho \vec{u}$, and the energy density, $\E$; the {\it primitive} variables are
the mass density, $\rho$, the fluid velocity, $\vec{u}$, and the pressure, $p$.
The energy $\E$ is related to the primitive variables through 
the equation of state, 
\begin{equation}
\label{eqn:eos}
\E = \frac{p}{ \gamma-1 } + \frac{1}{2}\rho \norm{ \vec{u} }^2,
\end{equation}
where the constant $\gamma$ is the ratio of specific heats (aka, the {\it gas constant}).  



The compressible Euler equations are an important mathematical model
in the study of gases and plasma. Attempts at numerically solving
the these equations has led to a plethora of important historical advances 
in the development of numerical analysis and scientific computing (see e.g.,
\cite{Godunov,Lax,CourantDifferences,VNeumann,LaxWendroff}).


\subsection{\it Discontinuous Galerkin spatial discretization}
\label{sec:dg_spatial}
The focus of this work is on high-order discontinuous Galerkin (DG) methods,
which were originally developed for general hyperbolic conservation laws
by Cockburn, Shu, et al. in series of
papers \cite{TVBRKDG2,TVBRKDG3,TVBRKDG4,TVBRKDG5,CoKarn}. 
The purpose of this section is to set the notation used throughout 
the paper and to briefly describe the DG spatial discretization.

Let $\Omega \subset \reals^d$ be a polygonal domain with boundary $\partial \Omega$. 
The domain $\Omega$ is discretized via a finite set of non-overlapping elements, $\Tm_i$, such
that  $\Omega = \cup_{i=1}^N \Tm_i$. 
Let ${P}^{\, \deg}\left(\reals^d\right)$ denote the set of polynomials from $\reals^d$ to $\reals$ 
with maximal polynomial degree $\deg$.
Let $\WS^{h}$ denote the {\it broken} finite element space on the mesh:
\begin{equation}
\label{eqn:broken_space}
   \WS^h := \left\{ w^h \in \left[ L^{\infty}(\Omega) \right]^{\meq}: \,
    w^h \bigl|_{\Tm_i} \in \left[ {P}^{\, \deg} \right]^{\meq}, \, \forall \Tm_i \in \Tm^h \right\},
\end{equation}
where $h$ is the mesh spacing.
The above expression means that $w^h \in \WS^{h}$ has
$\meq$ components, each of which when restricted to some element $\Tm_i$
is a polynomial of degree at most $\deg$ and no continuity is assumed
across element edges (or faces in 3D). 

The approximate solution on each element $\Tm_i$ at time $t=t^n$ is of the form
\beq
q^h(t^n, {\bf x}(\bxi)) \Bigl|_{\Tm_i} = \sum_{\ell=1}^{\mleg(\deg)} \, Q^{(\ell) n}_i  \,
\varphi^{(\ell)}\left( \boldsymbol{\xi} \right),
\eeq
where $\mleg$ is the number of Legendre polynomials and
$\varphi^{(\ell)}\left( \bxi \right): \reals^d \mapsto \reals$ are the Legendre polynomials defined
on the reference element ${\mathcal T}_0$ in terms of the reference coordinates $\bxi \in \Tm_0$.
The Legendre polynomials are orthonormal with respect to the following inner product:
\beq
\frac{1}{|\Tm_0|} \int_{\Tm_0} \varphi^{(k)}(\bxi) \, \varphi^{(\ell)}(\bxi) \, d\bxi = 
\begin{cases}
1 & \, \text{if} \quad k=\ell, \\
0 & \, \text{if} \quad k \ne \ell.
\end{cases}
\eeq
We note that independent of $h$, $d$, $\deg$,  and the type of element, the lowest order Legendre polynomial is always $\varphi^{(1)} \equiv 1$. This makes the first Legendre coefficient the
cell average:
\beq
    Q^{(1) n}_i = \frac{1}{|\Tm_0|} \int_{\Tm_0}
    q^h(t^n, {\bf x}(\bxi)) \Bigl|_{\Tm_i}  \, \varphi^{(1)}\left( \bxi \right) \,  d\bxi =: \avg{ q }^n_i.
\eeq

\subsection{\it Time stepping}
The most common approach for time-advancing DG spatial discretizations is via
explicit Runge-Kutta time-stepping; the resulting combination of time and space discretization is often
referred to as the ``RK-DG'' method \cite{TVBRKDG4}.  The primary advantage for this choice of time stepping
 is that explicit RK methods are easy to implement, they can be constructed to be
low-storage, and a subclass of these methods have the so-called strong
stability preserving (SSP) property \cite{GoShuTa01},
which is important for defining a scheme that is provably positivity-preserving.  
However, there are no explicit Runge-Kutta methods that are SSP for orders greater
than four \cite{Kraa1991,RuuthSpiteri2002}.

The main difficulty with Runge-Kutta methods is that they typically require many stages; and therefore, many communications are needed per time step.
One direct consequence of the communication required at each RK stage
is that is difficult to combine RK-DG with locally adaptive mesh refinement strategies 
that simultaneously refine in both space and time.
 
The key piece of technology required in locally adaptive DG schemes is
local time-stepping (see e.g., Dumbser et al. \cite{dumbser2007arbitrary}).
Local time-stepping is easier to accomplish with a single-stage, single-step (Lax-Wendroff) method than with a multi-stage Runge-Kutta scheme. 
For these reasons there is interest from discontinuous Galerkin theorists and
practitioners in developing single-step time-stepping techniques for DG
(see e.g.,
\cite{proceedings:TitarevToro02,QiuDumbserShu05,dumbser2005ader,DuMu06,TaDuBaDiMu07,DuBaDiToMuDi08,Gassner11,DuZaHiBa13}), 
as well as hybrid multistage multiderivative alternatives \cite{SeGuCh14}.

In this work, we construct a numerical scheme that uses a Lax-Wendroff
time discretization that is coupled with the discontinuous Galerkin spatial discretization. In subsequent discussions in this paper we demonstrate 
the advantages of switching to single-stage and single-stage 
time-stepping in regards to enforcing positivity on arbitrary meshes.

\subsection{\it Positivity preservation}

In simulations involving strong shocks, high-order schemes (i.e. more than first-order) for the compressible Euler equations generally create
nonphysical undershoots (below zero) in the density and/or pressure.  These undershoots 
typically cause catastrophic numerical instabilities due to a loss of hyperbolicity.
Moreover,  for many applications these positivity violations exist even when the equations are coupled 
with well-understood total variation diminishing (TVD) or total variation bounded (TVB) limiters.
The chief goal of the limiting scheme developed in this work
is to address positivity violations in density and pressure, and in particular, to accomplish this task with a high-order scheme.

In the DG literature, the most widely used strategy to maintain positivity was developed by Zhang and Shu
in a series of influential papers \cite{article:ZhangShu10rectmesh,ZhangShu11,ZhangShuTrimesh}.
The basic strategy of Zhang and Shu for a positivity-preserving RK-DG method can
be summarized as follows:
\begin{description}
\item[{\bf Step 0.}] Write the current solution in the form
\begin{equation}
	q^h \bigl|_{\Tm_i} = \avg{q}_i + \theta \left( q^h \bigl|_{\Tm_i} - 
	\avg{q}_i \right),
\end{equation}
where $\theta$ is yet-to-be-determined. $\theta = 1$ represents the unlimited solution.
\item[{\bf Step 1.}] 
Find the largest value of $\theta$, where $0 \le \theta \le 1$,
such that $q^h \bigl|_{\Tm_i}$ satisfies the appropriate positivity conditions
at some appropriately chosen quadrature points and limit the solution.
\item[{\bf Step 2.}] Find the largest stable time-step that
guarantees that with a forward Euler time that the cell average of the new
solution remains positive.
\item[{\bf Step 3.}] Rely on the fact that strong stability-preserving Runge-Kutta methods are convex combinations of forward Euler time steps;
and therefore, the full method preserves the positivity of cell averages
(under some slightly modified maximum allowable time-step).
\end{description}

For a Lax-Wendroff time discretization, numerical results indicate that the
  limiting found in Step 1 is insufficient to retain positivity of the
  solution, even for simple 1D advection.  Therefore, the strategy we pursue
  in this work will still contain an equivalent Step 1; however, in place of
  Step 2 and Step 3 above, we will make use of a parameterized flux, sometimes
  also called a flux
  corrected transport (FCT) scheme, to maintain positive cell averages
  after taking a single time step. In doing so, we avoid introducing
  additional time step restrictions that often appear (e.g., in Step 2. above)
  when constructing a positivity-preserving scheme based on Runge-Kutta time
  stepping.

This idea of computing modified fluxes by combining a stable low-order flux
with a less robust high-order flux is relatively old, and perhaps originates
with Harten and Zwas and their \emph{self adjusting hybrid scheme}
\cite{harten1972self}.  The basic idea is the foundation of the related
\emph{flux corrected transport} (FCT) schemes of Boris, Book and collaborators \cite{boris1973flux,book1975flux,boris1976flux,book1981finite},
where fluxes are adjusted in order to guarantee that average values of  the unknown are constrained to lie within 
locally defined upper and lower bounds.
This family of methods is used in an extensive variety of applications, ranging from seismology to meteorology \cite{ZalesakStruct,kuzmin05,zheng2006non,ullrich2014flux}.
A thorough analysis of some of the early methods is conducted in \cite{SOD}. Identical to modern maximum principle preserving (MPP) schemes, FCT can be formulated as a global optimization problem
where a ``worst case'' scenario assumed in order to decouple the previously coupled degrees
of freedom \cite{FCTPavel}. 
Here we do not attempt to use FCT to enforce any sort of local bounds (in the sense of developing a shock-capturing limiter), 
instead we leverage these techniques in order to retain positivity of the density and pressure
associated to $q^h(t^n, \vec{x})$; such approaches have recently
received renewed interest in the context of weighted essentially non-oscillatory (WENO) methods 
\cite{xu2013,liang2014parametrized,tang14,christlieb2015high,seal2014explicit,ChFeSeTa2015}.

To summarize, our limiting scheme draws on ideas from the two aforementioned families of techniques that are well established in the literature. First,
we start with the now well known (high-order) pointwise limiting
developed for discontinuous Galerkin methods \cite{article:ZhangShu10rectmesh,ZhangShu11,ZhangShuTrimesh},
and second, we couple this with the very large family of flux limiters \cite{christlieb2015high,seal2014explicit,ChFeSeTa2015}.
(developed primarily for finite-difference (FD) and finite-volume (FV)
schemes). 
\subsection{\it An outline of the proposed positivity-preserving method}


The compressible Euler equations \eqref{eqn:euler-eqns}
can be written compactly as
\begin{equation}
\label{eqn:conslaw}
    q_{,t} + {\nabla} \cdot {\bf F}(q) = 0,
\quad \text{in} \, \, \, \Omega \subset \reals^d,
\end{equation}
where the {\it conserved variables} are $q = (\rho, {\bf M}, \En)$ 
and the {\it flux function} is
\begin{equation}
\vec{F} \cdot \vec{n} = \begin{pmatrix}
 \vec{M} \cdot \vec{n} \\
 \left(\vec{M} \cdot \vec{n}\right) \vec{u} + p \vec{n} \\ 
\vec{u} \cdot \vec{n} \left( \En + p \right)
\end{pmatrix},
\end{equation}
where $\vec{M} = \rho \vec{u}$ and $\En = \frac{p}{\gamma-1} + \frac{1}{2} \rho \| \vec{u} \|^2$.


The basic positivity limiting strategy proposed in this work is
summarized below. Some important details are omitted here, but we
elaborate on these details in subsequent sections.

\begin{description}
\item[{\bf Step 0.}] On each element we write the solution as
\begin{equation}
	q^h(t^n, \vec{x}(\vec{\xi}))\Bigl|_{\Tm_i} := 
	\avg{q}^n_i + 
	\theta \sum_{k=2}^{\mleg(\deg)} Q^{(k)}_{i}(t) \varphi^{(k)}(\vec{x}),
\end{equation}
where $\theta$ is yet-to-be-determined. $\theta=1$ represents the 
{\it unlimited} solution.

\item[{\bf Step 1.}] Assume that this solution is positive in the
mean.  That is, we assume for all $i$ that $\avg{\rho}_i > 0$ and 
\begin{equation}
\label{eqn:ave_press}
	\avg{p}_i := (\gamma - 1) \avg{\E}_i 
    - \frac{1}{2} \frac{ \| {\bf \avg{M}}_i \|^2 }{ \avg{\rho}_i } > 0.
\end{equation}
A consequence of these assumptions is that $\avg{\E}_i > 0$.

\item[{\bf Step 2.}] Find the largest value of $\theta$, where $0 \le \theta \le 1$,
such that the density and pressure are positive at some suitably defined quadrature
points.
%
This step is elaborated upon in \S\ref{subsec:mpp-limiter}.

\item[{\bf Step 3.}] Construct time-averaged fluxes through the Lax-Wendroff procedure.  That is, we start with the exact definition of the time-average flux:
\begin{equation}
\label{eqn:picard1d-b}
    \avg{\bf F}^n(\vec{x}) := \frac{1}{\dt} \int_{t^n}^{t^{n+1}} {\bf F}( q(t,\vec{x} ) )\, dt
    = \frac{1}{\dt} \int_{0}^{\Delta t} {\bf F}( q(t^n+s,\vec{x} ) )\, ds
\end{equation}
and approximate this via a Taylor series expansion around $s=0$:
\begin{equation}\label{eqn:1D_system.TI-F}
  {\bf F}_T^n(\vec{x}) := {\bf F}( q(t^n,\vec{x}) ) 
    + \frac{\dt}{2!}   \frac{d {\bf F}}{dt} ( q(t^n,\vec{x}) ) 
    + \frac{\dt^2}{3!} \frac{d^2 {\bf F}}{dt^2}( q(t^n,\vec{x}) ) 
    = \avg{\bf F}^n(\vec{x}) + \BigOh(\dt^3).
\end{equation}
All time derivatives in this expression are replaced by spatial derivatives using
the chain rule and the governing PDE \eqref{eqn:conslaw}.
The approximate time-averaged flux \eqref{eqn:1D_system.TI-F} is first evaluated
at some appropriately chosen set of quadrature points -- in fact, the same
quadrature points as used in {Step 2} -- and then, using appropriate quadrature weights, summed together to define a high-order flux, at both interior and boundary quadrature points.  Thanks to {Step 2}, all quantities of
interest used to construct this expansion are positive at each quadrature
point.

%
\item[{\bf Step 4.}] Time step the solution so that cell averages are guaranteed to be positive.
That is, we update the \emph{cell averages} via a formula of the form
\begin{equation}
	Q^{(1) n+1}_i = Q^{(1) n}_i - \frac{\dt}{|\Tm_i|}
	\sum_{e \in {\Tm_i}}
	{\vec{F}}^{h*}_{e} \cdot \vec{n}_e,
\end{equation}
where $\vec{n}_e$ is an outward-pointing (relative to $\Tm_i$) 
normal vector to edge $e$
with the property that $\| \vec{n}_e \|$ is the length of edge $e \in {\Tm_i}$,
and the numerical flux on edge, ${\vec{F}}^{h*}_e$, is 
a convex combination of a high-order flux, ${\mathcal F}^H_e$, and a low-order flux ${\mathcal F}^L_e$:
\begin{equation}
	{\vec{F}}^{h*}_{e } := \theta {\mathcal F}_{e }^{\text{H}} + \left( 1 - \theta \right) 
	{\mathcal F}_{e}^{\text{L}}.
\end{equation}
The low-order flux, ${\mathcal F}_e^{\text{L}}$, is based on the (approximate) solution to the Riemann problem defined by cell averages only,  
and the ``high-order'' flux, ${\mathcal F}_e^{\text{H}}$, is constructed after integrating via Gaussian quadrature the (approximate) Riemann solutions 
 at quadrature points along the edge $e \in \Tm_i$:
\begin{equation}
	{\mathcal F}^{\text{H}}_{e } = \frac{1}{2} \sum_{k=1}^{{M}_{\text Q}} \omega_k 
		{\mathcal F}_{ek}^{\text{H}},
\end{equation}
where $\omega_k$ are the Gaussian quadrature weights for quadrature with 
${M}_{\text Q}$ points and  ${\mathcal F}_{ek}^{\text{H}}$ are the
numerical fluxes at each of the ${M}_{\text Q}$ quadrature points.
Note that this sum has only a single
summand in the one-dimensional case.
The selection of $\theta$ is described in more detail
in \S\ref{subsec:mpp-limiter}.
This step guarantees that the solution retains positivity (in the mean) for a single time step.

\item[{\bf Step 5.}] Apply a shock-capturing limiter.  The positivity-preserving limiter is designed to preserve positivity of the solution, 
but it fails at reducing spurious oscillations, and therefore a shock-capturing limiter needs to be added.  There are many choices of limiters available; 
we use the limiter recently developed in
\cite{MoeRossSe15} because of its ability to retain genuine high-order accuracy, and its ability to push the polynomial order to arbitrary degree without modifying the overall scheme.

\item[{\bf Step 6.}] Repeat all of these steps to update the solution for the
next time step.

\end{description}


Each step of this process is elaborated upon throughout the remainder of this
paper. The end result is that our method is the first scheme to simultaneously obtain
all of the following properties:
\begin{itemize}
\item {\bf High-order accuracy.}  The proposed method is third-order in space and time, and can
be extended to arbitrary order.
\item {\bf Positivity-preserving.}  The proposed limiter is provably 
positivity-preserving for the density and pressure, at a finite set of point values, for the entire simulation.
\item {\bf Single-stage, single-step.}  We use a Lax-Wendroff discretization
for time stepping the PDE, and therefore we only need one communication per 
time step.
\item {\bf Unstructured meshes.}  Because we use the discontinuous Galerkin method
for our spatial discretization and all of our limiters are sufficiently
local, we are able to run simulations with DG-FEM on both Cartesian and unstructured meshes.  
\item {\bf No additional time-step restrictions.} Because we do not rely on a SSP Runge-Kutta scheme, we do not
have to introduce additional time-step restrictions to retain positivity of
the solution.  This differentiates us from popular positivity-preserving
limiters based on RK time discretizations \cite{ZhangShu11}.
\end{itemize}

\subsection{\it Structure of the paper}

The remainder of this paper has the following structure.
The Lax-Wendroff DG (LxW-DG) method is described in \S\ref{sec:LWDG},
where we view the scheme as a method of modified fluxes.
The positivity-preserving limiter is described in \S\ref{sec:positivity},
where the discussion of the limiter is broken up into two parts:
(1) the moment limiter (\S\ref{sec:zh-limiter}) and (2) 
the parameterized flux limiter (\S\ref{subsec:mpp-limiter}).
In \S\ref{sec:numerical-results} we present numerical results on several
test cases in 1D, 2D Cartesian, and 2D unstructured meshes.
Finally we close with conclusions and a discussion of future work in
\S\ref{sec:conclusions}.

\section{The Lax-Wendroff discontinuous Galerkin scheme}
\label{sec:LWDG}
\subsection{\it The base scheme: A method of modified fluxes}

The Lax-Wendroff discontinuous Galerkin (LxW-DG) method \cite{QiuDumbserShu05} serves as the base
scheme for the method developed in this work.
It is the result of an application of the Cauchy-Kovalevskaya
procedure to hyperbolic PDE: we start with a Taylor series in time,
then we replace all time derivatives with spatial derivatives via the PDE.
Finally, a Galerkin projection
discretizes the overall scheme, where a single spatial derivative is reserved
for the fluxes in order to perform integration-by-parts. 

We review the Lax-Wendroff DG scheme for the case of a general
nonlinear conservation law that is autonomous in space and time
in multiple dimensions \cite{QiuDumbserShu05}.  The current presentation
illustrates the fact that Lax-Wendroff schemes can be viewed as a method of
modified fluxes, wherein higher-order information about the PDE is directly 
incorporated by simply redefining the fluxes that would typically be used in
an ``Euler step.''

We consider a generic conservation law of the form
\begin{equation}\label{eq:fluxform}
    q_{,t} + \nabla \cdot {\bf F}(q) = 0,
\end{equation}
where the matrix $\pd{{\bf F}}{q} \cdot \hat{ \bf n }$ is diagonalizable for
every unit length vector $\hat{ \bf n }$ and $q$ in the domain of interest.
Formal integration of  \eqref{eq:fluxform} over an interval $[t^n, t^{n+1}]$ results in
an exact update through
%
\begin{equation}
\label{eqn:exact-taylor}
    q(t+\Delta t, \vec{x}) = q(t, \vec{x})-\Delta t \, \nabla \cdot \avg{ \bf F}( q(t,\vec{x}) ),
\end{equation}
where the \emph{time-averaged} flux \cite{SeGuCh15} is defined as
\begin{equation}
    \avg{{\bf F}}( q(t,\vec{x}) ) := \frac{1}{\Delta t}\int_{t^n}^{t^{n+\Delta t}} {\bf F}( q(t,\vec{x}) ) \, dt.
\end{equation}
Moreover, a Taylor expansion of ${\bf F}$ and a change of variables yields
\begin{equation}
\label{eq:timeavflux}
    \begin{aligned}
     \avg{{\bf F}}(q)
     &= \frac{1}{\Delta t}\int_{0}^{\Delta t}
     \left({\bf F}(q^n)+\tau \, {\bf F}(q^n)_{,t}+\frac{1}{2} \tau^2 \, {\bf F}(q^n)_{,t,t}+\cdots\right)d\tau \\
     &= {\bf F}(q^n)+\frac{1}{2!} \Delta t \, {\bf F}(q^n)_{,t}+\frac{1}{3!} \Delta t^2 \,
     {\bf F}(q^n)_{,t,t}+\cdots,
    \end{aligned}
\end{equation}
which can be inserted into \eqref{eqn:exact-taylor}. 
In a numerical discretization of \eqref{eqn:exact-taylor}, the Taylor series
in \eqref{eq:timeavflux} is truncated after a finite number of terms.
%
\begin{rmk}
If $\avg{\bf F} \approx {\bf F}(q(t^n,\vec{x}))$, then \eqref{eqn:exact-taylor}
reduces to a forward Euler time discretization for hyperbolic
conservation law \eqref{eq:fluxform}. 
This fact will allow us to incorporate positivity-preserving
limiters into the Lax-Wendroff flux construction.
\end{rmk}
%
This observation allows us to incorporate the positivity-preserving
limiters that are presented in \S\ref{sec:zh-limiter} and \S\ref{subsec:mpp-limiter}, because we view the LxW-DG method as a method of modified fluxes.

\subsection{\it Construction of the time-averaged flux}

We now describe how to compute the temporal derivative terms:
\begin{equation}
 {\bf F}(q^n)_{,t}, \quad {\bf F}(q^n)_{,t,t}, \quad {\bf F}(q^n)_{,t,t,t}, \quad \dots
\end{equation}
that are required to define the time-averaged flux in  \eqref{eq:timeavflux}.
This discussion is applicable to high-order finite difference methods, finite volume 
methods (e.g., ADER), as well as 
discontinuous Galerkin finite element methods.

A single application of the chain rule to compute the time derivative of the flux function yields
\begin{equation}
\label{eqn:fluxderivs-a}
    \pd{\bf F}{t} = {\bf F}'(q) \cdot q_{,t} = -{\bf F}'(q) \cdot \left( \nabla \cdot {\bf F} \right),
\end{equation}
where the flux Jacobian is
\begin{equation}
    {\bf F}'(q)_{ij} := \pd{ {\bf F}_i }{ q_j }, \quad
    1 \leq i, j \leq M.
\end{equation}

The matrix-vector products in \eqref{eqn:fluxderivs-a} can be
compactly written using the Einstein summation convention (where repeated
indices are assumed to be summed over), which produces a
vector whose 
$i^{\text{th}}$-component is
\begin{equation}
    \pder[{\bf F}_i]{t} = \pder[{\bf F}_i ]{q_j} \pder[q_{j}]{t} = - \pder[{\bf F}_i ]{q_j}  \left( \nabla \cdot {\bf F} \right)_{j}.
\end{equation}
A second derivative of  \eqref{eqn:fluxderivs-a} yields
\begin{equation}
\label{eqn:fluxderivs-b}
        \pdn{2}{\bf F}{t} 
=   \pd{}{t} \left( -{\bf F}'(q) \nabla \cdot {\bf F} \right)
= {\bf F}''(q) \cdot \left( \nabla \cdot {\bf F}(q), \nabla \cdot {\bf F}(q) \right) 
+ {\bf F}'(q) \cdot \nabla \left( {\bf F}'(q) \left( \nabla \cdot {\bf F} \right) \right),
\end{equation}
where ${\bf F}''(q)$ is the Hessian with elements given by
\begin{equation}
	{\bf F}_{ijk} := \frac{ \partial^2 {\bf F}_i }{ \partial q_j \partial q_k }
= \left( \frac{ \partial^2 f_i }{ \partial q_j \partial q_k }, \frac{ \partial^2 g_i }{ \partial q_j \partial q_k } \right).
\end{equation}
Equations \eqref{eqn:fluxderivs-a} and \eqref{eqn:fluxderivs-b} are
generic formulae; the equalities are appropriate for any two-dimensional
hyperbolic system, and similar identities exist for three dimensions.
The first product in the right hand side of  \eqref{eqn:fluxderivs-b} is
understood as a Hessian-vector product.  Scripts that compute these derivatives, as well as the matrix, and Hessian vector products
that are necessary to implement a third-order Lax-Wendroff scheme for multidimensional Euler equations can be found in the open source software FINESS \cite{FINESS}.

Finally, these two time derivatives are sufficient to construct a third-order
accurate method by defining the time-averaged flux through
\begin{equation}
\label{eqn:taylor3-flux}
     {\bf F}^n_{T}(q) :=
     {\bf F}(q^n)+\frac{1}{2!} \Delta t \, {\bf F}(q^n)_{,t} +\frac{1}{3!} \Delta
     t^2 \,  {\bf F}(q^n)_{,t,t},
\end{equation}
and then updating the solution through
\begin{equation}
\label{eqn:taylor3}
    q^{n+1}(\vec{x}) = q^n(\vec{x}) - \Delta t \, \nabla \cdot { \bf F}^n_T
\end{equation}
in place of \eqref{eqn:exact-taylor}.

\subsection{\it Fully-discrete weak formulation}
\label{sec:fully-discrete}
The final step is to construct a fully discrete version of
 \eqref{eqn:taylor3}.  The LxW-DG scheme follows the following process
\cite{SeGuCh14,GuoQiuQiu14}:
\begin{description}

\item[{\bf Step 1.}] At each quadrature point evaluate the numerical flux, ${\bf F}(q^n)$, and then integrate this numerical flux
against basis functions to obtain a Galerkin expansion of ${\bf F}^h$ inside
each element.  

\item[{\bf Step 2.}] Using the Galerkin expansions of 
$q^h$ and ${\bf F}^h$, evaluate all required spatial derivatives to construct the
time expansion ${\bf F}^n_T$ in  \eqref{eqn:taylor3-flux} at each
quadrature point.

\item[{\bf Step 3.}] Multiply  \eqref{eqn:taylor3} by a test function $\varphi^{(\ell)}$,
integrate over a control element $\Tm_i$, and apply the divergence theorem to yield
\begin{equation}
\label{eqn:EulerStepExact}
 \int_{\Tm_i} q^{n+1} \varphi^{(\ell)} d{\bf x} = 
 \int_{\Tm_i} q^n\varphi^{(\ell)} d{\bf x} 
    - \dt \int_{\Tm_i} \nabla \varphi^{(\ell)} \cdot {{\bf F}}_T(q^n) \, d{\bf x}
    + \dt \oint_{\partial \Tm_i} \varphi^{(\ell)} \, {{\bf F}}_T( q^n ) \cdot {\bf \hat{n}} \, ds,
\end{equation}
where ${\bf \hat{n}}$ is the outward pointing unit normal to element $\Tm_i$,
which reduces to
\begin{equation}
\label{eqn:EulerStepExactGalerkin}
  Q^{(\ell) \, n+1}_i = Q^{(\ell) \, n}_i
    - \frac{\dt}{|\Tm_i|} \underbrace{ \int_{\Tm_i}  \nabla \varphi^{(\ell)} \cdot
    {{\bf F}^h_T} \, d{\bf x}}_{ 
        \text{Interior} }
    \, + \, \frac{\dt}{|\Tm_i|} \underbrace{ \oint_{\partial \Tm_i} \varphi^{(\ell)} \, {\bf F}^{h \, *}_T  \cdot {\bf \hat{n}} \, ds }_{
        \text{ Edges } } 
\end{equation}
by orthogonality of the basis $\varphi$.  
In practice, both the interior and edge integrals are approximated
by appropriate numerical quadrature rules.
The flux values, ${\bf F}^{h \, *}_T$, in the edge integrals
still need to be defined.

\item[{\bf Step 4.}] Along each edge solve Riemann problems at each quadrature point by
using the left and right interface values.  In this work we use the 
well-known Lax-Friedrichs flux:
\begin{equation}
\label{eqn:LLF}
{\bf F}^{h \, *}_T\left(q^h_-, \, q^h_+ \right) 
\cdot {\bf \hat{n}} = \frac{1}{2} \left[ {\bf \hat{n}} \cdot \left( {\bf F}_T \left(q^h_+\right) + {\bf F}_T\left(q^h_-\right) \right)
- s \left( q^h_+ - q^h_- \right) \right],
\end{equation}
where $s$ is an estimate of the maximum global wave speed,
$q^h_-$ is the approximate solution evaluated on the element boundary
on the interior side of $\Tm_i$, and $q^h_+$ 
is the approximate solution evaluated on the element boundary
on the exterior side of $\Tm_i$.

\end{description}

\subsection{\it Boundary conditions}
\label{subsec:boundary}

In order to achieve high-order accuracy at the boundaries of the computational domain, a careful treatment of the solution in each boundary element is required. In particular, all simulations
in this work require either reflective (hard surface) or transparent (outflow) boundary conditions. 

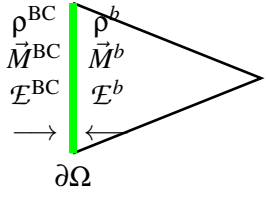
\begin{figure}
\begin{center}
\begin{tikzpicture}[scale=1, line width=1]
\coordinate (O) at (0,0);
\coordinate (A) at (2.5,1);
\coordinate (B) at (0,2);
\coordinate[label=left:$\begin{matrix}
\rho^{\text{BC}} \\
\vec{M}^{\text{BC}} \\
\E^{\text{BC}} \\
\longrightarrow
\end{matrix}$](c) at ($ (B)!.5!(O) $);
\coordinate[label=right:$\begin{matrix}
\rho^{b} \\
\vec{M}^{b} \\
\E^{b} \\
\longleftarrow
\end{matrix}$](c) at ($ (B)!.5!(O) $);
\coordinate[label=below:$\partial \Omega$](c) at ($ (O) $);
\draw (O)--(A)--(B);

\draw[line width=3,green] (B)--(O);
\end{tikzpicture}
\end{center}
\caption{Interior, $q^b$, and exterior, $q^{\text{BC}}$,
solution values on either side of the boundary
$\partial \Omega$. \label{fig:bcs}}
\end{figure}

Suppose the solution takes on the value 
\begin{equation}
q^b= \bigl(\rho^b, \, \vec{M}^b, \, \E^b \bigr)
\end{equation}
at a quadrature point ${\bf x}^b$ on the boundary $\partial \Omega$, and we wish
to define a boundary value, $q^{\text{BC}}$, on the exterior side of the boundary
$\partial \Omega$ that yields a flux with one of two desired boundary conditions.
This is depicted in Figure \ref{fig:bcs}. 
Let ${\bf \hat{t}}$ and ${\bf \hat{n}}$ be the unit tangent and unit normal
vectors to the boundary at the boundary point ${\bf x}^b$, respectively.
In both the reflective and transparent boundary conditions, we enforce continuity of the tangential components:
\begin{equation}
\label{eqn:bndy-tangent}
	\vec{M}^{\text{BC}} \cdot \hat{ \bf t } =  \vec{M}^b \cdot \hat{ \bf t }.
\end{equation}
The only difference between the two types of boundary conditions we consider lie in the normal direction. We set
\begin{equation}
\label{eqn:bndy-normal}
\vec{M}^{\text{BC}} \cdot \hat{ \bf n } =  \mp \vec{M}^b \cdot \hat{ \bf n },
\end{equation}
where the minus sign corresponds to the reflective boundary condition
and the plus sign corresponds to the transparent boundary condition.

From this we can easily write out the full boundary conditions 
at the point ${\bf x}^{\text{BC}}$:
\begin{equation}
	\begin{pmatrix}
	\rho^{\text{BC}} \\ \vec{M}^{\text{BC}} \\ \En^{\text{BC}}
	\end{pmatrix} 
	= 
	\begin{pmatrix}
	\rho^b \\ \left( \vec{M}^b \cdot \hat{ \bf t } \right) \hat{ \bf t } 
	\mp \left( \vec{M}^b \cdot \hat{ \bf n } \right) \hat{ \bf n } \\ \En^b
	\end{pmatrix},
\end{equation}
where again the minus sign corresponds to the reflective boundary condition
and the plus sign corresponds to the transparent boundary condition.

In order to achieve high-order time accuracy we need apply the above
boundary conditions to the time derivatives on the boundary:
\begin{equation}
	\begin{pmatrix}
	\rho^{\text{BC}}_{,t} \\ \vec{M}^{\text{BC}}_{,t} \\ \En^{\text{BC}}_{,t}
	\end{pmatrix} 
	= 
	\begin{pmatrix}
	\rho^b_{,t} \\ \left( \vec{M}^b_{,t} \cdot \hat{ \bf t } \right) \hat{ \bf t } \mp \left( \vec{M}^b_{,t} \cdot \hat{ \bf n } \right) \hat{ \bf n }
	   \\ \En^b_{,t}
	\end{pmatrix}, \quad
		\begin{pmatrix}
	\rho^{\text{BC}}_{,t,t} \\ \vec{M}^{\text{BC}}_{,t,t} \\ \En^{\text{BC}}_{,t,t}
	\end{pmatrix} 
	= 
	\begin{pmatrix}
	\rho^b_{,t,t} \\ 
	  \left( \vec{M}^b_{,t,t} \cdot \hat{ \bf t } \right) \hat{ \bf t } 
	  \mp \left( \vec{M}^b_{,t,t} \cdot \hat{ \bf n } \right) \hat{ \bf n } \\ \En^b_{,t,t}
	\end{pmatrix},
\end{equation}
where all time derivatives must be replaced by spatial derivatives using the PDE.

\section{Positivity preservation}
\label{sec:positivity}

The temporal evolution described in the previous section fails to retain
positivity of the solution, even in the simple case of linear advection with
smooth solutions that are near zero.  In fact, in extreme cases the projection of the
initial conditions can fail to retain positivity of the solution due to 
the Gibbs phenomena.  The positivity-preserving limiter we present follows a two
step procedure:
\begin{description}
\item[{\bf Step 1.}] Limit the moments in the expansion so that the solution is positive at
each quadrature point.
\item[{\bf Step 2.}] Limit the fluxes so that the cell averages retain positivity after a
single time step.
\end{description}
We now describe the first of these two steps.

\subsection{\it Positivity at interior quadrature points via moment limiters}
\label{sec:zh-limiter}

This section describes a procedure that implements the following: if the cell
averages are positive, then the solution is forced
to be positive at a preselected and finite collection of
quadrature points.  We select only the quadrature points
that are actually used in the numerical update; this includes
internal Gauss quadrature points as well as face/edge Gauss
quadrature points. Unlike other positivity limiting schemes \cite{ZhangShuTrimesh,article:ZhangShu10rectmesh} in the 
SSP Runge-Kutta framework, this step is not strictly necessary 
 to guarantee positivity of the cell average at the
next time-step; however, the main reason for applying the limiter
at quadrature points is to guarantee that each term in the update
is physical, which will reduce the total amount of additional 
limiting of the cell average updated needed in Section \ref{subsec:mpp-limiter}.
The process to maintain positivity at quadrature points is carried out in
a series of three simple steps.  Because this part of the limiter is entirely local, we focus on a single element $\Tm_k$, and therefore drop the subscript for ease of notation.

\subsubsection{\bf Step 0: Assume positivity of the cell averages.}
We assume that the cell averages for the density satisfies
$\avg{\rho}^n \geq \eps_0$, where $\eps_0 > 0$ is a cutoff parameter that
guarantees hyperbolicity of the system.  In this work, we set $\eps_0 =
10^{-12}$ in all simulations.  Furthermore, we assume that the cell average for the pressure
satisfies $\avg{p}^{n} > 0$, where the (average) pressure
$\avg{p}^n$ is defined through the averages of the other conserved quantities in \eqref{eqn:ave_press}.
Note that these two conditions are sufficient to imply that
the average energy $\avg{\E}^n$ is positive.


\subsubsection{\bf Step 1: Enforce positivity of the density.}

In this step, we enforce positivity of the density at each quadrature point
${\bf x}_m \in \Tm_k$.  Because $\avg{\rho}^n \geq \eps_0$, there exists a
(maximal) value $\theta^\rho \in [0,1]$ such that
\begin{equation}
\label{eqn:rho-theta}
    \rho^\theta_m := \avg{ \rho }^n + \theta \sum_{\ell=2}^{\mleg(\deg)} \rho^{(\ell)n} \, \varphi^{(\ell)}( {\bf x}_m )  \geq \epsilon_0 
\end{equation}
for all quadrature points ${\bf x}_m \in \Tm_k$ and all $\theta \in [0,
\theta^\rho]$.  Note that we drop the subscript that indicates the element number to
ease the complexity of notation for the the ensuing discussion.




\subsubsection{\bf Step 2: Enforce positivity of the pressure.}

Recall that the pressure is defined through the relation \eqref{eqn:eos}.
We seek to guarantee that $p({\bf x}_m) > 0$ for each quadrature point ${\bf x}_m \in
\Tm_i$.  In place of working directly with the pressure, we observe that it
suffices to guarantee that the product of the density and pressure is
positive.  To this end, we expand the momentum and energy in the same free
parameter $\theta$.  Similar to the density in \eqref{eqn:rho-theta},
we write the momentum and energy as
\begin{equation}
\label{eqn:momentum-rho}
    \vec{ M }^\theta_m := \avg{ \vec{ M } }^{ n } + 
    \theta
    \sum_{\ell=2}^{\mleg(\deg)} \vec{ M }^{(\ell)n} \, \varphi^{(\ell)}( {\bf x}_m ) 
    \quad \text{and} \quad
    \E^n_{m}   := \avg{ \E   }^{n} + \theta \sum_{\ell=2}^{\mleg(\deg)} 
    \E^{(\ell)n} \, \varphi^{(\ell)}( {\bf x}_m ) .
\end{equation}
We define deviations from cell averages as
\begin{equation}
   \left( \widetilde{\rho}^n_{m}, \, 
   \widetilde{ \vec{M} }_{m}, \, 
    \widetilde{\E}^n_{m} \right) :=  
    \sum_{\ell=2}^{\mleg(\deg)} 
    \left( \rho^{(\ell)n}, \, \vec{M}^{(\ell)n}, \, \E^{(\ell)n} \right)
     \varphi^{(\ell)}( {\bf x}_m ),
\end{equation}
and compactly write the expression for the limited variables as 
the cell average plus deviations:
\begin{equation}
\label{eqn:q-theta}
    q^\theta_m := \avg{q} + \theta \widetilde{ q }_m, \quad \text{where} \quad
    \avg{q}:=\left( \avg{\rho}^{ n }, \,
    \avg{ \vec{ M } }^{ n }, \, \avg{\E}^{ n } \right)
    \quad \text{and} \quad
    \widetilde{ q }_m = \left( \widetilde{\rho}^n_m, \, \widetilde{ \vec{M} }^n_{m},
    \, \widetilde{\E}^n_{m} \right).
\end{equation}

The product of the density and pressure at each quadrature point is
a quadratic function of $\theta$:
\begin{equation}\label{eq:rhopress}
     (\rho p)^\theta_m := \rho^\theta_m \, p^\theta_m = (\gamma-1)\left( 
        \E^\theta_m \rho^\theta_m - \frac{1}{2} \norm{ \vec{M}^\theta_m }^2
    \right),
\end{equation}
where $\rho^\theta_m$ is defined in  \eqref{eqn:rho-theta}.
%
%
After expanding each of these conserved variables in the
\emph{same scaling parameter} $\theta$,
we observe that
\begin{equation}
\label{eqn:product-rho-p}
\begin{aligned}
    \left(\rho p \right)^\theta_m
        &= (\gamma - 1)\left( \E^\theta_{m} \rho^\theta_{m} - \frac{1}{2} \norm{ \vec{M}^\theta_{m} }^2 \right)\\
        &= (\gamma-1)\left[
        a_m \theta^2 + b_m \theta +  
        \left( \underbrace{  
            \avg{ \E }^{n} \avg{ \rho }^{n} - 
            \frac{1}{2} \norm{ \avg{ \vec{M} }^{n} }^2
        }_{> 0} \right)
        \right],
\end{aligned}
\end{equation}
where $a_m$ and $b_m$ depend only on the quadrature point and higher-order terms of the expansions of 
density, energy, and momentum:
 \begin{equation}
    a_m = \widetilde{\E}^n_{m}\, \widetilde{\rho}^n_{m} -
        \frac{1}{2}  \norm{ \widetilde{ \vec{M} }^n_m }^2  \quad \text{and}
        \quad
    b_m =\widetilde{\E}^n_{m} \, \avg{ \rho }^{n}_m + \avg{ \E }^{n}_m \, \widetilde{\rho}^n_{m}- \widetilde{ \vec{M} }^n_{m} \cdot \avg{ \vec{M} }^n.
\end{equation}

The quadratic function defined by \eqref{eqn:product-rho-p} is non-negative
for at least one value of $\theta$, namely $\theta = 0$. However, 
if \eqref{eqn:product-rho-p} is positive at $\theta = 0$, then
we are guaranteed that there exists a $\theta_m \in
(0,1]$ that guarantees
$\left(\rho p \right)^\theta_m \geq 0 $ for all $\theta \in [0, \theta_m]$.
In particular, we are interested in finding the largest such $\theta$ (i.e.,
the least amount of damping).
Instead of exactly computing the optimal $\theta$, which could readily be done, but would require additional floating point operations, we make use of the following lemma \cite{seal2014explicit} to find an approximately optimal $\theta$.
\begin{lem}
The pressure function is a convex function of $\theta$ on $[0, \theta^\rho]$.
That is, 
\begin{equation}
\label{eqn:pressure-concavity}
p^{ \alpha \theta_1 + (1-\alpha) \theta_2 }_m
\geq
\alpha p^{\theta_1}_m + (1-\alpha) p^{ \theta_2}_m
\end{equation}
for all \mbox{$\theta_1, \theta_2 \in [0, \theta^\rho]$}, and $\alpha \in [0,1]$.
\end{lem}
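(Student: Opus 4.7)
The plan is to show that $p^\theta_m$, viewed as a scalar function of the single parameter $\theta$ on $[0,\theta^\rho]$, is concave; the displayed inequality is precisely Jensen's inequality for a concave function (so despite the wording ``convex'' in the lemma name, the content of the statement is concavity of $\theta \mapsto p^\theta_m$). The starting point is the explicit formula obtained by substituting the affine expansions from (\ref{eqn:rho-theta}) and (\ref{eqn:momentum-rho}) into the pressure relation, giving
\begin{equation*}
p^\theta_m = (\gamma-1)\left(\E^\theta_m - \tfrac{1}{2}\,\frac{\norm{\vec M^\theta_m}^2}{\rho^\theta_m}\right),
\end{equation*}
where each of $\rho^\theta_m$, $\vec M^\theta_m$, $\E^\theta_m$ is \emph{affine} in $\theta$, and where $\rho^\theta_m \geq \eps_0 > 0$ throughout $[0,\theta^\rho]$ by the defining property of $\theta^\rho$ in Step~1. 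The affine term $(\gamma-1)\E^\theta_m$ is simultaneously convex and concave, so concavity of $p^\theta_m$ reduces to showing that the kinetic-energy-density term $f(\theta) := \norm{\vec M^\theta_m}^2/\rho^\theta_m$ is \emph{convex} in $\theta$.

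To establish convexity of $f$, I would proceed by direct differentiation. Writing $\vec M(\theta) = \avg{\vec M}^n + \theta\,\widetilde{\vec M}^n_m$ and $\rho(\theta) = \avg{\rho}^n + \theta\,\widetilde{\rho}^n_m$, a routine quotient-rule calculation (applied twice) collapses into the manifestly non-negative expression
\begin{equation*}
f''(\theta) = \frac{2}{\rho(\theta)^3}\,\bigl\lVert \widetilde{\vec M}^n_m\,\rho(\theta) - \vec M(\theta)\,\widetilde{\rho}^n_m \bigr\rVert^2 \geq 0,
\end{equation*}
from which concavity of $p^\theta_m$ follows on multiplying by $-\tfrac{1}{2}(\gamma-1) < 0$ and adding the affine term. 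An alternative, calculation-free route would be to invoke the well-known convexity of the scalar quadratic-over-linear function $g(x,y) = x^2/y$ on $\R \times \R_{>0}$ (whose Hessian is the rank-one positive semidefinite matrix $\frac{2}{y^3}(y,-x)^\top(y,-x)$), and then observe that each component $M_i(\theta)^2/\rho(\theta)$ is the composition of $g$ with the affine map $\theta \mapsto (M_i(\theta),\rho(\theta))$, hence convex; a sum of convex functions is convex.

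The main (minor) obstacle is really just careful bookkeeping rather than any real analytic difficulty. The identity above is only meaningful on an interval where $\rho(\theta)$ stays strictly positive, so one must explicitly use that $[0,\theta^\rho]$ was constructed in Step~1 precisely to enforce $\rho^\theta_m \geq \eps_0 > 0$; without this, the denominator could vanish and the pressure would not even be defined, let alone concave. I would end the proof by remarking that equality in the Jensen inequality requires $\widetilde{\vec M}^n_m\,\rho(\theta) \equiv \vec M(\theta)\,\widetilde{\rho}^n_m$, a degenerate condition that is not expected to hold in general, so the concavity is strict away from this degenerate case — this strictness is not needed for the subsequent flux-limiting analysis but is a useful byproduct of the direct second-derivative approach.
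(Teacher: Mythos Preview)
Your proof is correct. The paper's own argument is slightly more abstract and shorter: it observes that $q^\theta_m = \avg{q} + \theta\,\widetilde q_m$ is \emph{affine} in $\theta$, so that $q^{\alpha\theta_1 + (1-\alpha)\theta_2}_m = \alpha q^{\theta_1}_m + (1-\alpha) q^{\theta_2}_m$, and then simply invokes the concavity of the pressure $p(q)$ as a function of the conserved variables on the set $\{\rho>0\}$ to conclude. Your ``alternative, calculation-free route'' via the quadratic-over-linear function is exactly this argument with the concavity of $p(q)$ spelled out rather than quoted; your primary route --- the explicit formula $f''(\theta) = \tfrac{2}{\rho(\theta)^3}\lVert \widetilde{\vec M}\,\rho(\theta) - \vec M(\theta)\,\widetilde\rho\rVert^2$ --- is a direct one-variable verification that avoids appealing to any multivariate convexity fact, and as you note it yields the bonus of identifying the degenerate equality case. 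Both your versions and the paper's rely on the same essential ingredient, namely that $\theta^\rho$ was chosen so that $\rho^\theta_m > 0$ on $[0,\theta^\rho]$, which you correctly flag.
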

\begin{proof}
We observe that directly from the definition of the limiter for the conserved
variables in  \eqref{eqn:q-theta} that
\begin{equation}
    q^{\alpha \theta_1 + (1-\alpha)\theta_2}_m 
    = \alpha q^{\theta_1}_m + (1-\alpha) q^{\theta_2}_m,
\end{equation}
and therefore
\begin{equation}
    p^{ \alpha \theta_1 + (1-\alpha) \theta_2 }_m
    = p\left( q^{\alpha \theta_1 + (1-\alpha) \theta_2}_m \right) 
        = p\left( \alpha q^{\theta_1}_m + (1-\alpha) q^{ \theta_2 }_m \right)
        \geq
        \alpha p^{\theta_1}_m + (1-\alpha) p^{ \theta_2}_m.
\end{equation}
The final inequality follows because $\rho^\theta_m > 0$ for all $0 \leq \theta
\leq \theta^\rho$, and the pressure is a convex function (of the conserved variables) whenever the density is positive.
\end{proof}

As a consequence of  \eqref{eqn:pressure-concavity}  we can define
\begin{equation}
\theta_m := \min \left( \frac{ p^{0}_m }{ p^{0}_m - p^{\theta^\rho}_m },
\, \theta^\rho_m \right),
\end{equation}
which will guarantee that $p^\theta_m > 0$ for all $\theta \in [0, \theta_m]$.
Finally, we define the scaling parameter for the entire cell as
\begin{equation}
    \theta := \min_m \left\{ \theta_{m} \right\}
\end{equation}
and use this value to limit the higher order coefficients in the Galerkin
expansions of the density, momentum, and energy 
displayed in Eqns.
\eqref{eqn:rho-theta} and \eqref{eqn:momentum-rho}.
This definition gives us the property that $\rho^n_m \geq \eps_0$ and $p^n_{m} >0$ at each quadrature
point ${\bf x}_m \in \Tm_i$.  This process is repeated (locally) in each element $\Tm_i$ in
the mesh.  As a side benefit to guaranteeing that the density and pressure are
positive, we have the following remark.
\begin{rmk}
If $\rho^\theta_m$ and $p^\theta_m$ are positive at each quadrature point,
then $\E^\theta_m$ is also positive at each quadrature point.
\end{rmk}
\begin{proof}
Divide \ \eqref{eqn:product-rho-p} by $(\gamma-1)\rho^\theta_m$ and add
$\frac{1}{2} \norm{ \vec{M}_m^\theta }^2$ to both sides.
\end{proof}

This concludes the first of two steps for retaining positivity of the
solution.  We now move on to the second and final step, which takes into
account the temporal evolution of the solver.

\subsection{\it Positivity of cell averages via parameterized flux limiters}
\label{subsec:mpp-limiter}
The procedure carried out for the flux limiter presented in this section is very similar to recent work
for finite volume \cite{christlieb2015high} as well as finite difference
\cite{seal2014explicit,ChFeSeTa2015} methods.  When compared to the finite
difference methods, the main difference in this discussion is that the
expressions do not simplify as much because quantities such as the edge lengths
must remain in the expressions.  This makes them more similar to 
work on finite
volume schemes \cite{christlieb2015high}. Overall however, there is little difference between flux limiters on Cartesian and
unstructured meshes, and between flux limiters for finite difference, finite volume (FV), and discontinuous Galerkin (DG) schemes.
This is because the updates for the cell average in a DG solver can be
made to look identical to the update for a FV solver, and once flux interface values are identified, a conservative FD method can be made to look like a FV
solver, albeit with a different stencil for the discretization.

All of the aforementioned papers rely on the result of Perthame and Shu
\cite{PerthameShu}, which states that a first-order finite
volume scheme (i.e., one that is based on a piecewise constant 
representation with forward Euler time-stepping) that uses
the Lax-Friedrichs (LxF) numerical flux is positivity-preserving under
the usual CFL condition. 
Similar to previous work, we leverage this idea and incorporate it
into a flux limiting procedure.  Here, the focus is on Lax-Wendroff discontinuous Galerkin schemes.  

In this work we write out the details of the limiting procedure
only for the case of 2D triangular elements. However, 
all of the formulas generalize
to higher dimensions and Cartesian meshes.

To begin, we consider the Euler equations
\eqref{eqn:euler-eqns}
and a mesh that fits the description given in \S\ref{sec:dg_spatial}. 
After integration over a single cell, $\Tm_i$, and an application of the
divergence theorem, we see that the exact evolution equation for the cell
average of the density is given by  
\begin{equation}
    \label{eqn:semi-discrete}
    \frac{d}{dt} \int_{\Tm_i} q \, d{\bf x}
        = - \oint_{\p \Tm_i } \vec{F} \cdot {\bf \hat{n}} \, ds,
\end{equation}
where ${\bf \hat{n}}$ is the outward pointing (relative to $\Tm_i$) unit normal 
to the boundary of $\Tm_i$.
Applying to this equation a first-order finite volume
discretization using the Lax-Friedrichs flux yields
\begin{equation}
	\avg{q}^{n+1}_i = \avg{q}^{n}_i - \frac{\dt}{|\Tm_i|}
	\sum_{e \in {\Tm_i}}
	f^{ \, \text{LxF}}_{e},
\end{equation}
and the Lax-Friedrichs flux is
\begin{equation}
f^{ \, \text{LxF}}_{e} :=
 \frac{1}{2} \vec{n}_e \cdot \left( \vec{F}\left(\avg{q}^n_{e+}\right) +
\vec{F}\left(\avg{q}^n_{i}\right) \right)
- \frac{1}{2} \| \vec{n}_e \| s \left( \avg{q}^n_{e+} - \avg{q}^n_{i} \right),
\end{equation}
where $\vec{n}_e$ is an outward-pointing (relative to $\Tm_i$) 
normal vector to edge $e$
with the property that $\| \vec{n}_e \|$ is the length of edge $e \in {\Tm_i}$,
and the $e+$ index refers to the solution on edge $e$ on the exterior side of
${\Tm_i}$.
We use a \emph{global} wave speed $s$, because
this flux defines a provably positivity-preserving
scheme \cite{PerthameShu}.  Other fluxes can be used, provided they are
positivity-preserving (in the mean).

Recall that the update for the LxW-DG method is given in
\eqref{eqn:EulerStepExactGalerkin}.  The numerical edge flux,
${\bf F}^{h*}_T$, is based on 
the temporal Taylor series expansion of the fluxes.  The update
for the cell averages takes the form:
\begin{equation}
  \avg{q}^{n+1}_i = \avg{q}^n_i
    - \frac{\dt}{|\Tm_i|} \oint_{\partial \Tm_i} {\bf F}^{h*}_T \cdot 
    \vec{\hat{n}} \, ds,
\end{equation}
which in practice needs to be replaced by a numerical quadrature
along the edges. Applying Guassian quadrature along each edge
produces the following edge value (or face value in the case of 3D):
\begin{equation}
f_e^{\text{LxW}} :=
    \frac{1}{2} \sum_{k=1}^{{M}_{\text Q}} \omega_k \, {\bf F}_T^{h*}( {\bf x}_k ) \cdot {\bf n}_e,
\end{equation}
where ${\bf x}_k$ and ${\omega}_k$ are Gaussian quadrature points and weights,
respectively, for integration along edge $e$.
%
%
This allows us to write the update for the cell average in the Lax-Wendroff DG
method in a similar fashion to that of the Lax-Friedrichs solver, but this time we have higher-order fluxes:
\begin{equation}
    \avg{q}^{n+1}_i = \avg{q}^n_i - \frac{\dt}{|\Tm_i|} \sum_{e \in {\Tm_i} } f^{\, \text{LxW}}_e.
\end{equation}


Next we define a parameterized flux on edge $e$ by
\begin{equation}
    \widetilde{f}_e:=\theta_e \left( f^{\, \text{LxW}}_e - 
    f^{\, \text{LxF}}_e \right)+{f}^{\text{LxF}}_e, 
\end{equation}
where $\theta_e \in [0,1]$ is as free parameter that is yet to be
determined.
We also define the quantity, $\Gamma_i$, which by virtue of the positivity
of the Lax-Friedrichs method is positive in both density and pressure:
\begin{equation}
\avg{q}^{n+1}_i = \frac{\dt}{|\Tm_i|} \Gamma_i, \quad \text{where} \quad
	 \Gamma_i := \frac{|\Tm_i|}{\dt} \avg{q}^n_i -  \sum_{e \in \Tm_i} 
	 {f}^{\text{LxF}}_e.
\end{equation}
In order to retain a positive density in the high-order
update formula, the following condition must be satisfied:
\begin{equation}
	\avg{\rho}^{n+1}_i = \avg{\rho}^n_i - \frac{\dt}{|\Tm_i|} \sum_{e \in 
	\Tm_i} \tilde{f}^{\rho}_e \ge 0
	\quad \Longrightarrow \quad
	\avg{\rho}_i^{n+1} = \frac{\dt}{|\Tm_i|} \left( \Gamma_{i}^{\rho} + 
     \sum_{e \in \Tm_i}
     \theta_e  \Delta f^\rho_e \right) \geq 0,
\end{equation}
where 
\begin{equation}
\Delta f_e := f^{\text{LxF}}_e -f^{\text{LxW}}_e.
\end{equation}
Note that a $\rho$ superscript is introduced to the flux function
in order to denote the first component of the flux, namely the 
mass flux.
Positivity of the density is achieved if 
\begin{equation}
\label{eqn:low-order-update-eqn}
	\sum_{e \in \Tm_i} \theta_e \Delta f^{\rho}_e \ge -\Gamma^\rho_i.
\end{equation}

The basic procedure for positivity limiting is to reduce the values of $\theta_e$ until  inequality condition  \eqref{eqn:low-order-update-eqn} 
is satisfied. For a 
triangular mesh, there are three values of $\theta_e$ that contribute to
each cell. In this case there exists a three-dimensional feasible region 
that contains all admissible $\theta_e$ values, one for each $e \in \Tm_i$,
where the new average density, $\avg{\rho}_i^{ \, n+1}$, is positive.
Finding the exact boundary of this set is computationally impractical; and therefore, we make an approximation so that the problem becomes 
much simpler. In particular, we approximate the feasible region by a rectangular
cuboid:
\begin{equation}
 	S_i^\rho := \left[0,\varLambda_{e_{i1}} \right] \times \left[0,\varLambda_{e_{i2}}]\times[0,\varLambda_{e_{i3}}\right] 
	\subseteq [0,1]^3,
\end{equation}
where $e_{i1}$, $e_{i2}$, and $e_{i3}$ are the three edges that make up element $\Tm_i$,
over which
\begin{equation}
	\avg{ \rho }^{n+1}_i = \avg{\rho}^n_i-\frac{\dt}{|\Tm_i|} 
	\left(  \widetilde{f}^{ \, \rho}_{e_{i1}} + \widetilde{f}^{ \, \rho}_{e_{i2}} 
	+ \widetilde{f}^{ \, \rho}_{e_{i3}} \right)
	 \ge 0,
	\quad \forall
	 \left(\theta_{e_{i1}}, \, \theta_{e_{i2}}, \,  \theta_{e_{i3}}\right) \in S_i^\rho.
\end{equation}

Once we have determined the feasible region for each element over which the average density,
 $\avg{ \rho }^{n+1}_i$, remains positive, the next step is to rescale each
 $\varLambda_{e_{ik}}$ for $k=1,2,3$ to also guarantee a positive average pressure,
 $\avg{ p }^{n+1}_i$, on the same element:
 \begin{equation}
 	S_i^{\rho p} := \left[0,\mu_{e_{i1}} \, \varLambda_{e_{i1}} \right] 
	\times \left[0,\mu_{e_{i2}} \, \varLambda_{e_{i2}}]
	\times[0,\mu_{e_{i3}} \, \varLambda_{e_{i3}}\right] 
	\subseteq [0,1]^3,
\end{equation}
where $0 \le \mu_{e_{ik}} \le 1$ for each $i$ and for each $k=1,2,3$.

We leave the details of the procedure to determine the feasibility region
to the next subsection, in which
we also summarize the full algorithm.

\subsection{\it Putting it all together: An efficient implementation of the positivity-preserving limiter}
An efficient implementation of the positivity preserving limiter should avoid
communication with neighboring cells as much as possible.  Indeed, this is one
advantage of single-stage, single-step methods such as the Lax-Wendroff
discontinuous Galerkin method.  In order to avoid additional communication
overhead, we suggest the implementation described below.

In the formulas below we make use of the following quantities:
\begin{align}
{\bf n}_{e} &:= \text{normal vector to edge $e$ such that } \| {\bf n}_e \| 
\text{ is equal to the length of edge $e$}, \\
{e}_{ik} &:= \text{label of $k^{\text{th}}$ edge ($k=1,2,3$) of element $\Tm_i$}, \\
\label{eqn:epsik}
{\epsilon}_{ik} &:= \begin{cases}
+1 & \text{if } \, {\bf n}_{e_{ik}} \, \text{ is outward pointing relative to $\Tm_i$}, \\
-1 & \text{if } \, {\bf n}_{e_{ik}} \, \text{ is inward pointing relative to $\Tm_i$},
\end{cases} \\
i_{e} &:= \text{the element that has $e$ as an edge and for which ${\bf n}_{e}$
is outward pointing}, \\
j_{e} &:= \text{the element that has $e$ as an edge and for which ${\bf n}_{e}$
is inward pointing}.
\end{align}
These quantities are illustrated in Figure \ref{fig:edge}.

\begin{figure}
\begin{center}
(a) \begin{tikzpicture}[scale=1, line width=1]
\coordinate (O) at (0,0);
\coordinate (A) at (2.5,1);
\coordinate (B) at (0,2);
\coordinate (C) at (-2.5,1);
\coordinate (D1) at (0,1);
\coordinate (D2) at (0.75,1);
\draw (O)--(A)node[rotate=0,xshift=-1.2cm, yshift=0.0cm] {$\Tm_{j_e}$}--(B)--(O);
\draw (C)--(O)node[rotate=90,xshift=1cm, yshift=0.225cm] {edge:  $e$};
\draw (C)node[rotate=0,xshift=1.5cm, yshift=0.0cm] {$\Tm_{i_e}$}--(B);
\draw[->] (D1)node[xshift=0.5cm,yshift=-0.25cm]{$\vec{n}_e$}--(D2);
\end{tikzpicture}
\hspace{5mm}
(b) \begin{tikzpicture}[scale=1, line width=1]
\coordinate (CC) at (1.5,1);
\coordinate (O) at (0,0);
\coordinate (A) at (3,0);
\coordinate (B) at (1.5,2);
\coordinate (D1) at (1.5,-0.4);
\coordinate (D2) at (1.5,0.4);
\coordinate (E1) at (1.9,0.7375);
\coordinate (E2) at (2.6,1.2625);
\coordinate (F2) at (1.1,0.7375);
\coordinate (F1) at (0.4,1.2625);
\draw (O)--(A)--(B)--(O);
\draw[->] (D1)node[xshift=0.4cm,yshift=0.65cm]{$\vec{n}_{e_{i1}}$}--(D2);
\draw (CC)node{$\Tm_i$};
\draw[->] (E1)node[xshift=0.9cm,yshift=0.2cm]{$\vec{n}_{e_{i2}}$}--(E2);
\draw (CC)node{$\Tm_i$};
\draw[->] (F1)node[xshift=-0.1cm,yshift=-0.3cm]{$\vec{n}_{e_{i3}}$}--(F2);
\draw (CC)node{$\Tm_i$};
\end{tikzpicture}
\end{center}
\caption{Illustrations of various quantities needed in the
discontinuous Galerkin update on unstructured grids.
Panel (a) illustrates the normal vector $\vec{n}_e$ to the edge
$e$ and the two elements $i_e$ (on the outward side of $\vec{n}_e$)
and $j_e$ (on the inward side of $\vec{n}_e$). 
Panel (b) illustrates the three normal vectors $\vec{n}_{e_{i1}}$,
$\vec{n}_{e_{i2}}$, and $\vec{n}_{e_{i3}}$ to the three edges of 
element $\Tm_i$. In the case shown in Panel (b), the 
values of ${\epsilon}_{ik}$ as defined in Equation 
\eqref{eqn:epsik} are ${\epsilon}_{i1} = -1$,
${\epsilon}_{i2} = 1$, and ${\epsilon}_{i3} = -1$.
 \label{fig:edge}}
\end{figure}
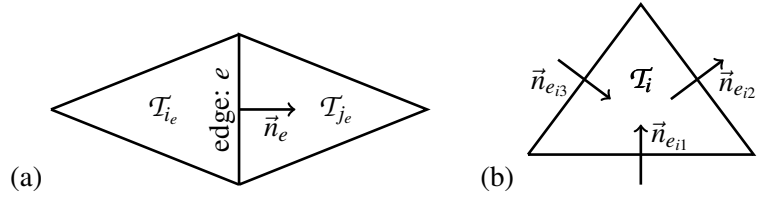

\medskip

\noindent
Loop over all elements, $i=1,2,\ldots,M_{\text{elems}}$:
\begin{description}
\item[\quad {\bf Step 1.}] Enforce positivity of the density and pressure at internal
and boundary quadrature points using the limiter described in detail in Section \ref{sec:zh-limiter}.
\item[\quad {\bf Step 2.}] Compute the time-averaged fluxes, ${\bf F}^n_{T}$, defined
in Equation \eqref{eqn:taylor3-flux}.
\end{description}

\medskip

\noindent
Loop over all edges, $e=1,2,\ldots,M_{\text{edges}}$:
\begin{description}
\item[\quad {\bf Step 3.}] For each quadrature point, ${\bf x}_\ell$,
on the current edge compute the high-order numerical flux:
\begin{align}
{\bf F}_T^{h*}( {\bf x}_{\ell} ) \cdot {\bf n}_e &:=  \frac{{\bf n}_e}{2} 
\cdot \left[ {\bf F}_T \hspace{-0.8mm} \left(q^h \hspace{-0.8mm} \left({\bf x}_{k}^{+}\right)\right)
+ {\bf F}_T \hspace{-0.8mm} \left(q^h \hspace{-0.8mm} \left({\bf x}_{k}^{-}\right)\right) \right]
- \frac{s \| {\bf n}_e  \|}{2}  \hspace{-0.6mm} \left( q^h \hspace{-0.8mm} \left({\bf x}_{k}^{+}\right) - 
q^h \hspace{-0.8mm} \left({\bf x}_{k}^{-}\right) \right),
\end{align}
where $s$ is an estimate of the maximum global wave speed.
From these numerical flux values, compute the edge-averaged high-order flux:
\begin{align}
f_e^{\text{LxW}} &:=
    \frac{1}{2} \sum_{\ell=1}^{{M}_{\text Q}} \omega_{\ell} \, {\bf F}_T^{h*}( {\bf x}_{\ell} ) \cdot {\bf n}_e,
\end{align}
where $\omega_k$ are the weights in the Gauss-Legendre numerical quadrature
with ${M}_{\text Q}$ points.
Still on the same edge, also compute the  edge-averaged low-order flux:
\begin{align}
	f^{ \, \text{LxF}}_{e} &:=
 \frac{\vec{n}_e}{2}  \cdot \Bigl[ \vec{F}\left(\avg{q}^{\, h}_{j_e}\right) +
\vec{F}\left(\avg{q}^{\, h}_{i_e}\right) \Bigr]
- \frac{s \| \vec{n}_e \|}{2} \Bigl( \avg{q}^{\, h}_{j_e} - \avg{q}^{\, h}_{i_e} \Bigr).
\end{align}
Finally, set
\begin{equation}
\Lambda_e = 1.
\end{equation}
\end{description}

\medskip

\noindent
Loop over all elements, $i=1,2,\ldots,M_{\text{elems}}$:
\begin{description}
\item[\quad {\bf Step 4.}] Let $\Delta f_{k} = 
\epsilon_{ik} \left( f^{\text{LxF}}_{e_{ik}} -f^{\text{LxW}}_{e_{ik}}\right)$ 
for $k=1,2,3$ represent the high-order flux contribution on the three edges of
element $\Tm_i$. We choose a reordering of the indices 1, 2, 3 
to the indices $a$, $b$, $c$ such that the three
flux differences are ordered as follows:
\begin{equation}
\Delta f^{\rho}_{{a}} \le \Delta f^{\rho}_{{b}} \le \Delta f^{\rho}_{{c}}.
\end{equation}
We now define the three values $\Lambda_{ia}$,
$\Lambda_{ib}$, and $\Lambda_{ic}$ and
consider four distinct cases.

\medskip

\begin{description}
\item[\quad {\bf Case 1.}] If
\, $0 \le \Delta f^{\rho}_{a} \le \Delta f^{\rho}_{b} \le \Delta f^{\rho}_{c}$, \,
then
\begin{equation}
 \Lambda_{ia} =\Lambda_{ib} =\Lambda_{ic} = 1.
\end{equation}

\item[\quad {\bf Case 2.}] If \, $\Delta f^{\rho}_{a} < 0 \le \Delta f^{\rho}_{b} \le \Delta f^{\rho}_{c}$, \, then set
\begin{equation}
\varLambda_{{ia}} = \min\left\{
1, \frac{\Gamma_i}{\left|\Delta f^{\rho}_{{a}}\right|}\right\}, \quad
\varLambda_{{ib}} = \varLambda_{{ic}} = 1.
\end{equation}

\item[{\quad \bf Case 3.}] If \, $\Delta f^{\rho}_{{a}}  \le \Delta f^{\rho}_{{b}} < 0 \le 
 \Delta f^{\rho}_{{c}}$, \, then set
\begin{equation}
\varLambda_{{ia}} = \varLambda_{{ib}} =  \min\left\{1, \frac{\Gamma_i}{\left|\Delta f^{\rho}_{{a}}+\Delta f^{\rho}_{{b}}\right|}\right\}, \quad
\varLambda_{{ic}} = 1.
\end{equation}

\item[\quad {\bf Case 4.}] If \, $\Delta f^{\rho}_{{a}}  \le \Delta f^{\rho}_{{b}}  \le 
 \Delta f^{\rho}_{{c}} < 0$, \, then set
\begin{equation}
\varLambda_{{ia}}
= \varLambda_{{ib}} = 
\varLambda_{{ic}}  =  \min\left\{ 1, \frac{\Gamma_i}{\left|\Delta f^{\rho}_{{a}}+\Delta f^{\rho}_{{b}}+\Delta f^{\rho}_{{c}}\right|}\right\}.
\end{equation}
\end{description}
Note that in each of these cases the ratios used in the relevant formulas
are found by setting the positive contributions on the left-hand side of inequality 
\eqref{eqn:low-order-update-eqn} equal to zero, and then solving for the
remaining elements.  This is equivalent to only looking at the worst case
scenario where mass is only allowed to flow out of element $\Tm_i$.

\medskip 

\item [\quad {\bf Step 5.}] Define the Lax-Friedrichs average solution:
\begin{equation}
\avg{q}^{ \, \text{LxF}}_i := \avg{q}^n_i-\frac{\dt}{|\Tm_i|} 
\sum_{k=1}^3 f^{ \, \text{LxF}}_{{k}},
\end{equation}
and do the following.

\begin{description}
\item[{\bf (a)}] Loop over all seven cases, $c=111, 110, 101, 100, 011, 010, 001$,
 shown in Figure \ref{fig:seven-cases} and for each case construct the average solution:
\begin{equation}
\avg{q}^{ \, c}_i := \avg{q}^n_i-\frac{\dt}{|\Tm_i|} 
\sum_{k=1}^3 \alpha_k  \, \varLambda_{{ik}} \, f^{ \, \text{LxW}}_{k}.
\end{equation}
The seven cases studied here enumerate all of the possible values of $\alpha_k
\in \{0,1\}$, with the exception of $c=000$, which reduces to updating the
element with purely a Lax-Friedrichs flux.

\item[{\bf (b)}] For each $c$, 
determine the largest value of $\mu_c \in [0,1]$ such that
the average pressure as defined by \eqref{eqn:ave_press}
and based on the average state:
\begin{equation}
\mu_c \, \avg{q}^{ \, c}_i + \left( 1 - \mu_c \right) \, \avg{q}^{ \, \text{LxF}}_i
\end{equation}
is positive. Note that the average pressure is always positive when
$\mu_c=0$, and if the average pressure is positive for $0 \leq r \leq 1$, then
the average pressure will be positive for any $0 \leq \mu_c \leq r$.  This is
because the pressure is a convex function of $\mu_c$.

\item[{\bf (c)}] Rescale the edge $\varLambda$ values (when compared to the
neighboring elements) based on $\mu_c$ as follows:
\begin{align}
	\varLambda_{e_{i1}} &= \min \left\{
	\varLambda_{e_{i1}}, \, \varLambda_{{i1}} \cdot \min\Bigl\{ \mu_{111}, \, \mu_{110},  \,\mu_{101}, \,
	\mu_{100} \Bigr\} \right\}, \\
	\varLambda_{e_{i2}} &= \min \left\{
	\varLambda_{e_{i2}}, \, \varLambda_{{i2}} \cdot \min\Bigl\{ \mu_{111}, \, \mu_{110},  \, \mu_{011}, \,
	\mu_{010} \Bigr\} \right\}, \\
	\varLambda_{e_{i3}} &= \min \left\{
	\varLambda_{e_{i3}}, \, \varLambda_{{i3}} \cdot \min\Bigl\{ \mu_{111}, \, \mu_{101},  \, \mu_{011},  \,
	\mu_{001} \Bigr\} \right\}.
\end{align}
\end{description}
\end{description}

\medskip

\noindent
Loop over all elements, $i=1,2,\ldots,M_{\text{elems}}$:
\begin{description}
\item [\quad {\bf Step 6.}] For each of the three edges that
make up element $\Tm_i$ determine the damping coefficients,
$\theta_k$ for $k=1,2,3$, as follows:
\begin{equation}
\theta_k = \Lambda_{e_{ik}} \quad \text{for} \quad k=1,2,3.
\end{equation}
Update the cell averages:
\begin{equation}
Q^{(1) \, n+1}_i = Q^{(1) \, n}_i - \frac{\dt}{|\Tm_i|} 
\sum_{k=1}^3 \Bigl[ 
\theta_{k} \, f^{ \, \text{LxW}}_{e_{ik}}
 + \left( 1- \theta_{{k}} \right) \, f^{ \, \text{LxF}}_{e_{ik}}
\Bigr],
\end{equation}
as well as the high-order moments
\begin{equation}
  Q^{(\ell) \, n+1}_i = Q^{(\ell) \, n}_i
    - \frac{\dt}{|\Tm_i|} { \int_{\Tm_i}  \nabla \varphi^{(\ell)} \cdot
    {{\bf F}^h_T} \, d{\bf x}}
    \, + \, \frac{\dt}{|\Tm_i|} { \oint_{\partial \Tm_i} \varphi^{(\ell)} \, {\bf F}^{h \, *}_T  \cdot {\bf \hat{n}} \, ds }
\end{equation}
for $2 \le \ell \le \mleg$, where exact integration is replaced by numerical quadrature.

\end{description}

\begin{figure}
\begin{center}
(a)\begin{tikzpicture}[scale=0.8, line width=1]
\coordinate (A) at (-1.5cm,-1.cm);
\coordinate (C) at (0.5cm,-1.0cm);
\coordinate (B) at (-1.5cm,1.0cm);
\draw (A) -- node[rotate=90,xshift=0.0cm, yshift=0.2cm] {$\alpha_1 = 1$} (B) -- 
node[right,rotate=-45,xshift=-0.5cm, yshift=0.2cm] { \, $\alpha_3 = 1$} (C) -- node[below] {$\alpha_2 = 1$} (A);
\end{tikzpicture}
\hspace{4mm}
(b)\begin{tikzpicture}[scale=0.8, line width=1]
\coordinate (A) at (-1.5cm,-1.cm);
\coordinate (C) at (0.5cm,-1.0cm);
\coordinate (B) at (-1.5cm,1.0cm);
\draw (A) -- node[rotate=90,xshift=0.0cm, yshift=0.2cm] {$\alpha_1 = 1$} (B) -- 
node[right,rotate=-45,xshift=-0.5cm, yshift=0.2cm] { \, $\alpha_3 = 0$} (C) -- node[below] {$\alpha_2 = 1$} (A);
\end{tikzpicture}
\hspace{4mm}
(c)\begin{tikzpicture}[scale=0.8, line width=1]
\coordinate (A) at (-1.5cm,-1.cm);
\coordinate (C) at (0.5cm,-1.0cm);
\coordinate (B) at (-1.5cm,1.0cm);
\draw (A) -- node[rotate=90,xshift=0.0cm, yshift=0.2cm] {$\alpha_1 = 1$} (B) -- 
node[right,rotate=-45,xshift=-0.5cm, yshift=0.2cm] { \, $\alpha_3 = 1$} (C) -- node[below] {$\alpha_2 = 0$} (A);
\end{tikzpicture}
\hspace{4mm}
(d)\begin{tikzpicture}[scale=0.8, line width=1]
\coordinate (A) at (-1.5cm,-1.cm);
\coordinate (C) at (0.5cm,-1.0cm);
\coordinate (B) at (-1.5cm,1.0cm);
\draw (A) -- node[rotate=90,xshift=0.0cm, yshift=0.2cm] {$\alpha_1 = 1$} (B) -- 
node[right,rotate=-45,xshift=-0.5cm, yshift=0.2cm] { \, $\alpha_3 = 0$} (C) -- node[below] {$\alpha_2 = 0$} (A);
\end{tikzpicture}

\vspace{3mm}

(e)\begin{tikzpicture}[scale=0.8, line width=1]
\coordinate (A) at (-1.5cm,-1.cm);
\coordinate (C) at (0.5cm,-1.0cm);
\coordinate (B) at (-1.5cm,1.0cm);
\draw (A) -- node[rotate=90,xshift=0.0cm, yshift=0.2cm] {$\alpha_1 = 0$} (B) -- 
node[right,rotate=-45,xshift=-0.5cm, yshift=0.2cm] { \, $\alpha_3 = 1$} (C) -- node[below] {$\alpha_2 = 1$} (A);
\end{tikzpicture}
\hspace{4mm}
(f)\begin{tikzpicture}[scale=0.8, line width=1]
\coordinate (A) at (-1.5cm,-1.cm);
\coordinate (C) at (0.5cm,-1.0cm);
\coordinate (B) at (-1.5cm,1.0cm);
\draw (A) -- node[rotate=90,xshift=0.0cm, yshift=0.2cm] {$\alpha_1 = 0$} (B) -- 
node[right,rotate=-45,xshift=-0.5cm, yshift=0.2cm] { \, $\alpha_3 = 0$} (C) -- node[below] {$\alpha_2 = 1$} (A);
\end{tikzpicture}
\hspace{4mm}
(g)\begin{tikzpicture}[scale=0.8, line width=1]
\coordinate (A) at (-1.5cm,-1.cm);
\coordinate (C) at (0.5cm,-1.0cm);
\coordinate (B) at (-1.5cm,1.0cm);
\draw (A) -- node[rotate=90,xshift=0.0cm, yshift=0.2cm] {$\alpha_1 = 0$} (B) -- 
node[right,rotate=-45,xshift=-0.5cm, yshift=0.2cm] { \, $\alpha_3 = 1$} (C) -- node[below] {$\alpha_2 = 0$} (A);
\end{tikzpicture}
\end{center}
\caption{Seven cases used to enforce the positivity of the average pressure
on each element: (a) $111$, (b) $110$, (c) $101$, (d) $100$, (e) $011$,
(f) $010$, and (g) $001$. \label{fig:seven-cases}}
\end{figure}
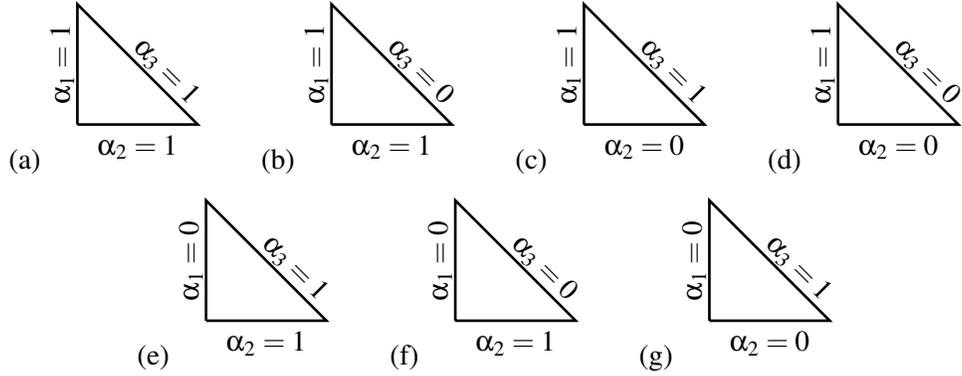

\begin{rmk}
Extensions to 2D Cartesian, 3D Cartesian, and 3D tetrahedral mesh elements
follow directly from what is presented here,
and require considering flux values along each of the edges/faces of a given element.
\end{rmk}


\section{Numerical results}
\label{sec:numerical-results}

\subsection{\it Implementation details}

All of the results presented in this section are implemented in the open-source
software package {\sc dogpack} \cite{dogpack}.  In addition, 
the positivity limiter described thus far is not designed to handle shocks, and
therefore an additional limiter needs to be applied in order to prevent spurious oscillations from developing (e.g., in problems that contain shocks but have large densities).  
There are many options available for this step, but 
in this work, we supplement the
positivity-preserving limiter presented here with the recent shock-capturing limiter developed in 
\cite{MoeRossSe15} in order to navigate shocks that develop in the solution. Specifically we use the version of this limiter
that works with the primitive variables, and we set the parameter $\alpha=500 \Delta x^{1.5}$. 
In our experience, this limiter with these parameters
offers a good balance between damping oscillations while maintaining sharply refined solutions.
Additionally, we point out that extra efficiency can be realized by locally storing quantities computed for the
aforementioned positivity-limiter as well as this shock-capturing limiter. 

Unless otherwise noted, these examples use a CFL number of $0.08$ with a $3^{\text{rd}}$ order
Lax-Wendroff time discretization.  
All of the examples have the positivity-preserving and shock-capturing
limiters turned on.

\subsection{\it One-dimensional examples}



In this section we present some standard one-dimensional problems that can be
found in \cite{seal2014explicit} and references therein. These problems are
designed to break codes that do not have a mechanism to retain positivity of
the density and pressure, but with this limiter, we are able to successfully
simulate these problems.

\subsubsection{\it Double rarefaction problem}
\label{ex:doublemach}

Our first example is the double rarefaction problem that can be found in
\cite{article:ZhangShu10rectmesh,ZhangShu11,seal2014explicit}. This is a Riemann problem with initial conditions
given by $(\rho_{L},u^1_{L},p_L)=(7,-1,0.2)$ and $(\rho_{R},u^1_{R},p_R)=(7,1,0.2)$. The solution involves two
rarefaction waves that move in opposite directions that leave near zero
density and pressure values in the post shock regime.
We present our solution on a mesh with a course resolution of $\Delta
x=\frac{1}{100}$, as well as a highly refined
solution with $\Delta x=\frac{1}{1000}$. Our results, shown in Figure \ref{fig:1ddoublemach}, are comparable to those obtained in other works, however the shock-capturing limiter
we use is not very diffusive and therefore there is a small amount of oscillation visible in the solution at the lower resolution. However
this oscillation vanishes for the more refined solution.
\begin{figure}
\begin{center}
 \subfigure[]{\includegraphics[width=0.45 \linewidth]{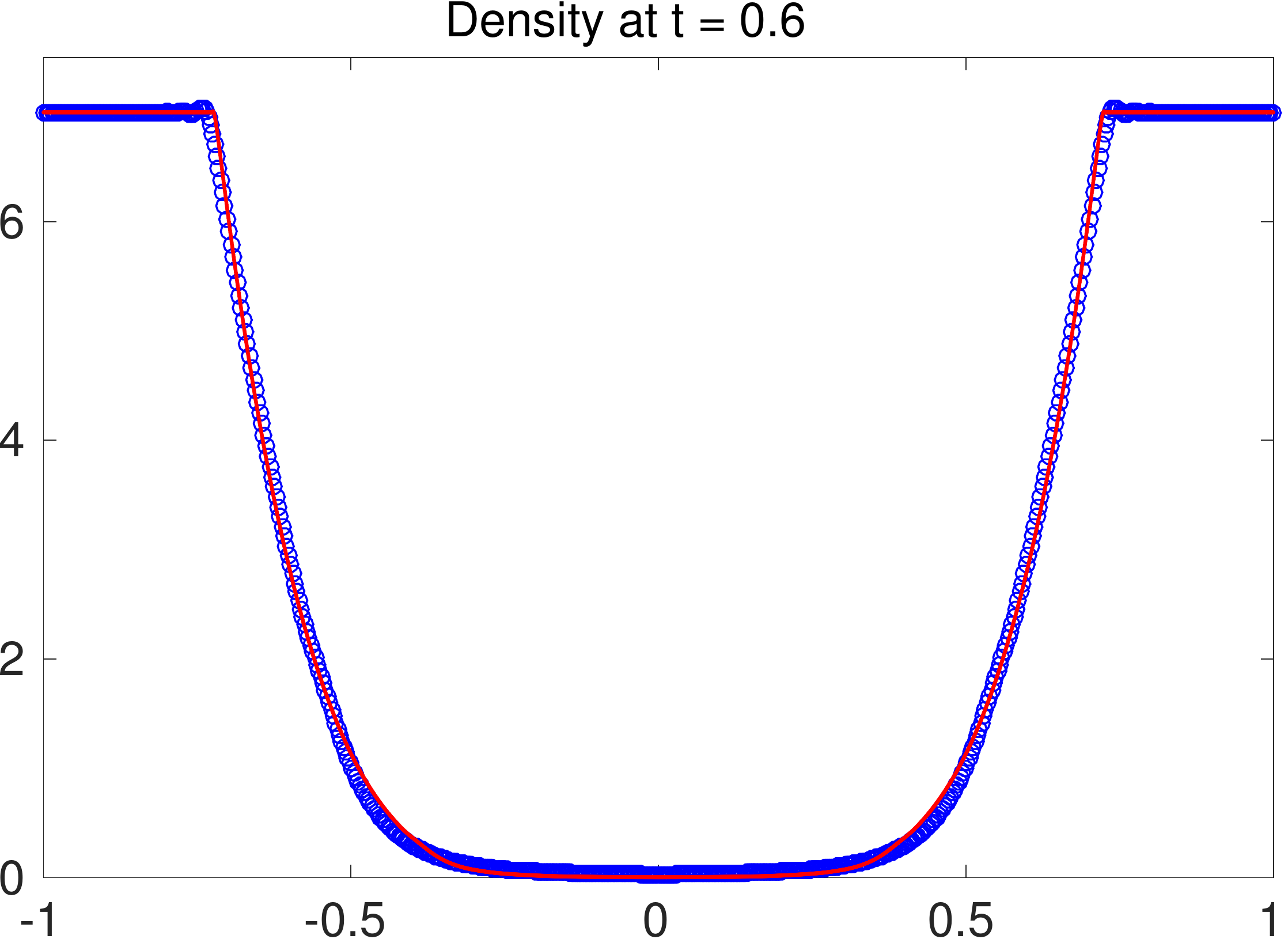}}
  \subfigure[]{\includegraphics[width=0.45 \linewidth]{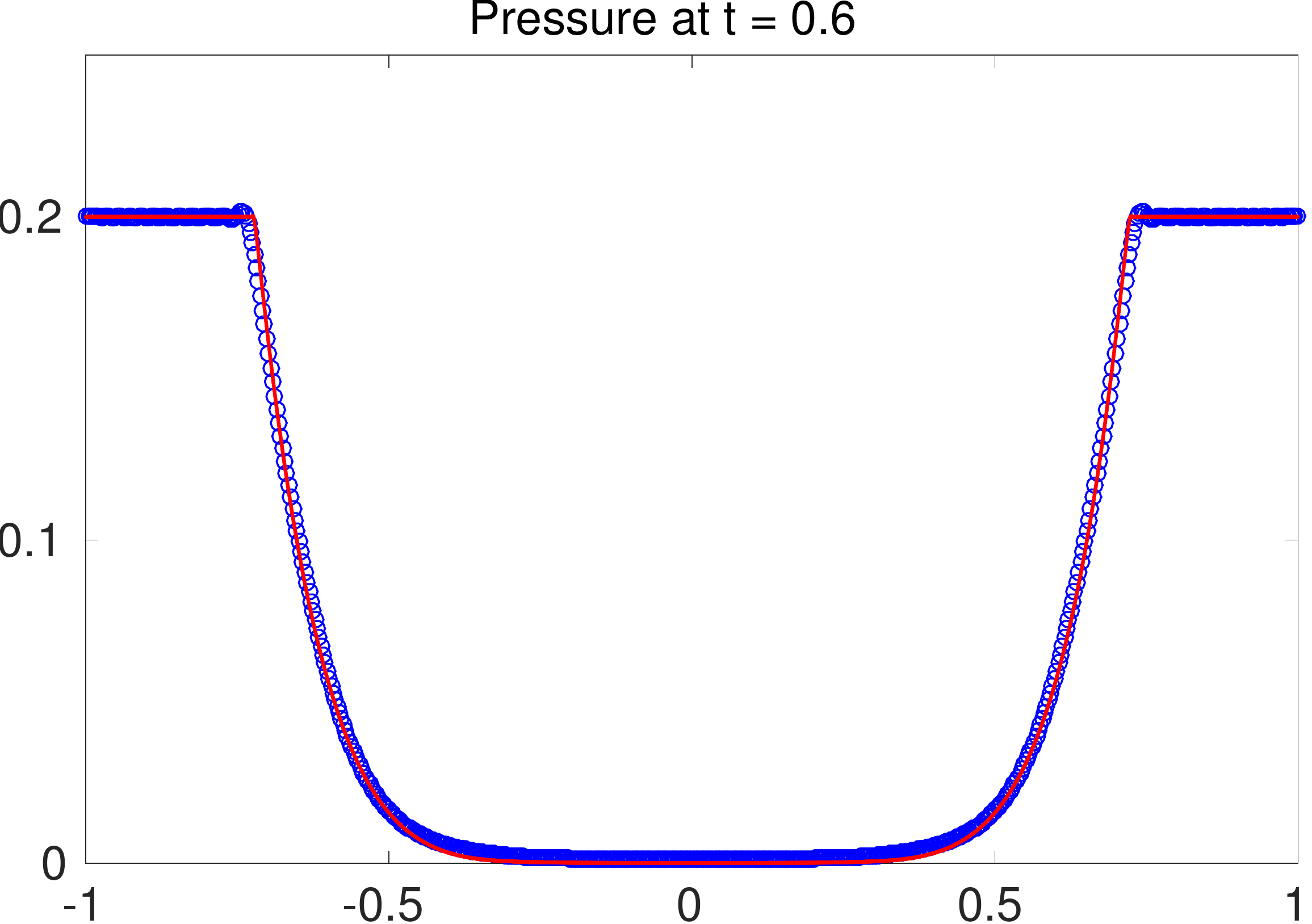}}
  \subfigure[]{\includegraphics[width=0.45 \linewidth]{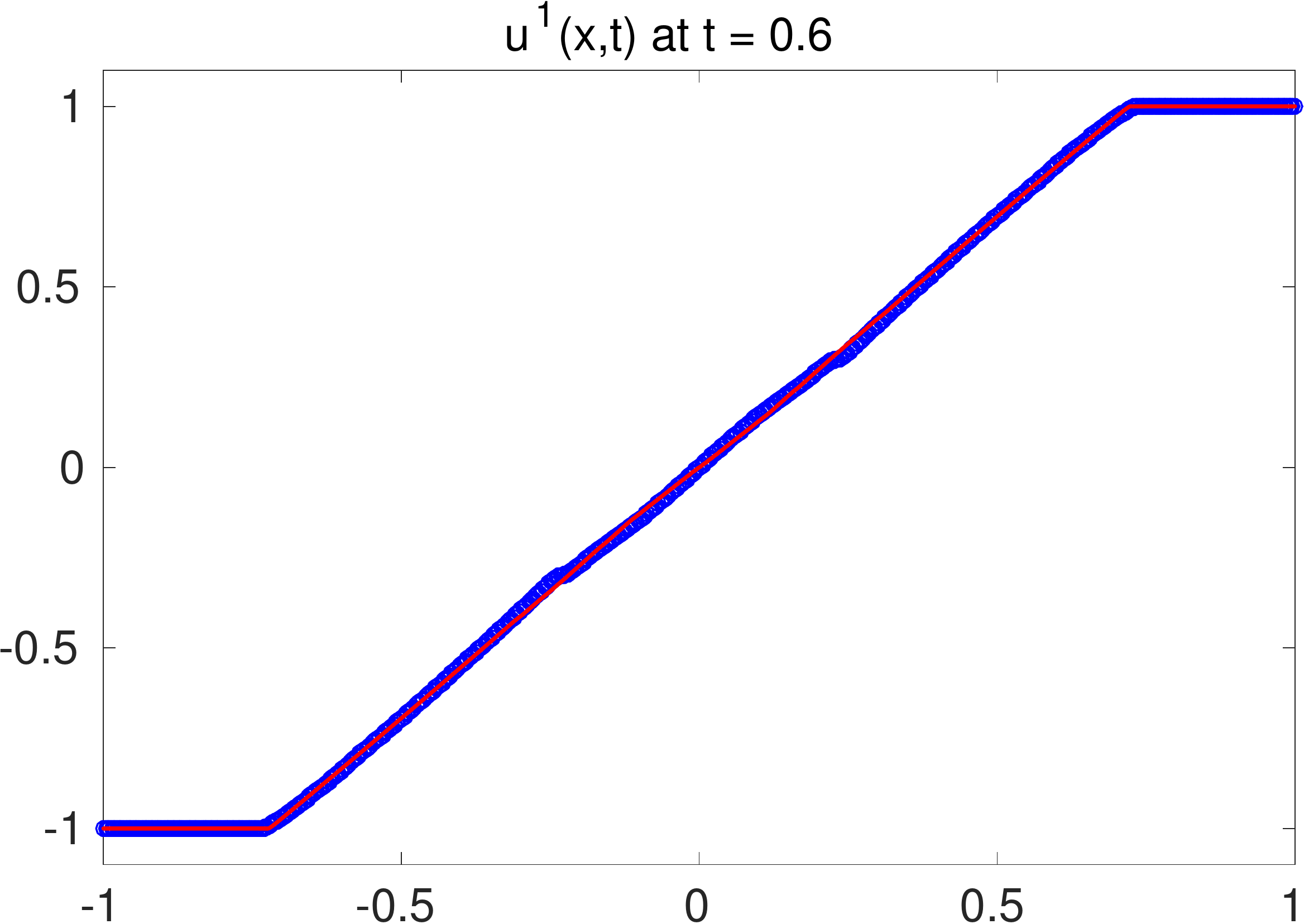}}
  \caption{The solution for the double-rarefaction problem. This is a standard example that fails for methods that are not positivity-preserving. The blue dots correspond to the computed numerical solution, and the red line corresponds to the computed
  solution on a highly refined mesh.
  \label{fig:1ddoublemach}
  }
 \end{center}
 \end{figure}

\subsection{\it Sedov blast wave}
\label{ex:Sedov1d}

This example is a simple one-dimensional model of an explosion that is difficult to
simulate without aggressive (or positivity-preserving) limiting. The initial
conditions involve one central cell with a large amount of
energy buildup that is surrounded by a large area of undisturbed air. These
initial conditions are supposed to approximate a delta function of energy.  As
time advances, a strong shock waves emanates from this central region and they move in 
opposite directions. This leaves the central post-shock regime with near zero density. 

The initial conditions are uniform in both density and velocity, with
$\rho=1$ and $u^1=0$.   The energy takes on the value $\E=\frac{3200000}{\Delta x}$ in the central cell and 
$\E=1.0\times 10^{-12}$ in every other cell.
This problem is explored extensively by Sedov, and in his classical text gives
an exact solution that we use to construct the exact solution underneath our
simulation \cite{sedov1993similarity}. 
We show our solution in Figure \ref{fig:1dsedov}, and we point out that our
results are quite good especially since we use such a coarse resolution of
size $\Delta x=\frac{1}{100}$. 

\begin{figure}
\begin{center}
 \subfigure[]{\includegraphics[width=0.45 \linewidth]{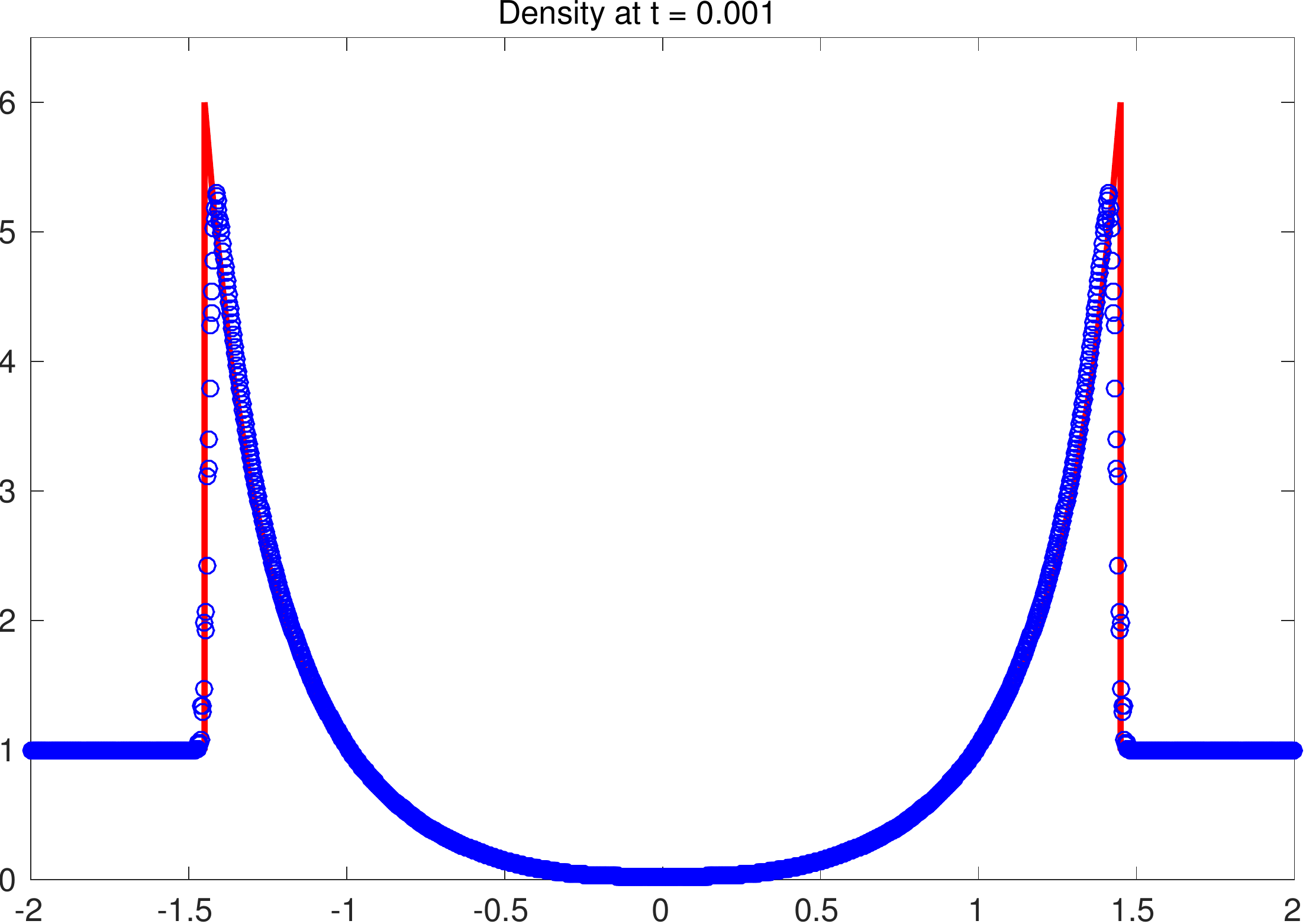}}
  \subfigure[]{\includegraphics[width=0.45 \linewidth]{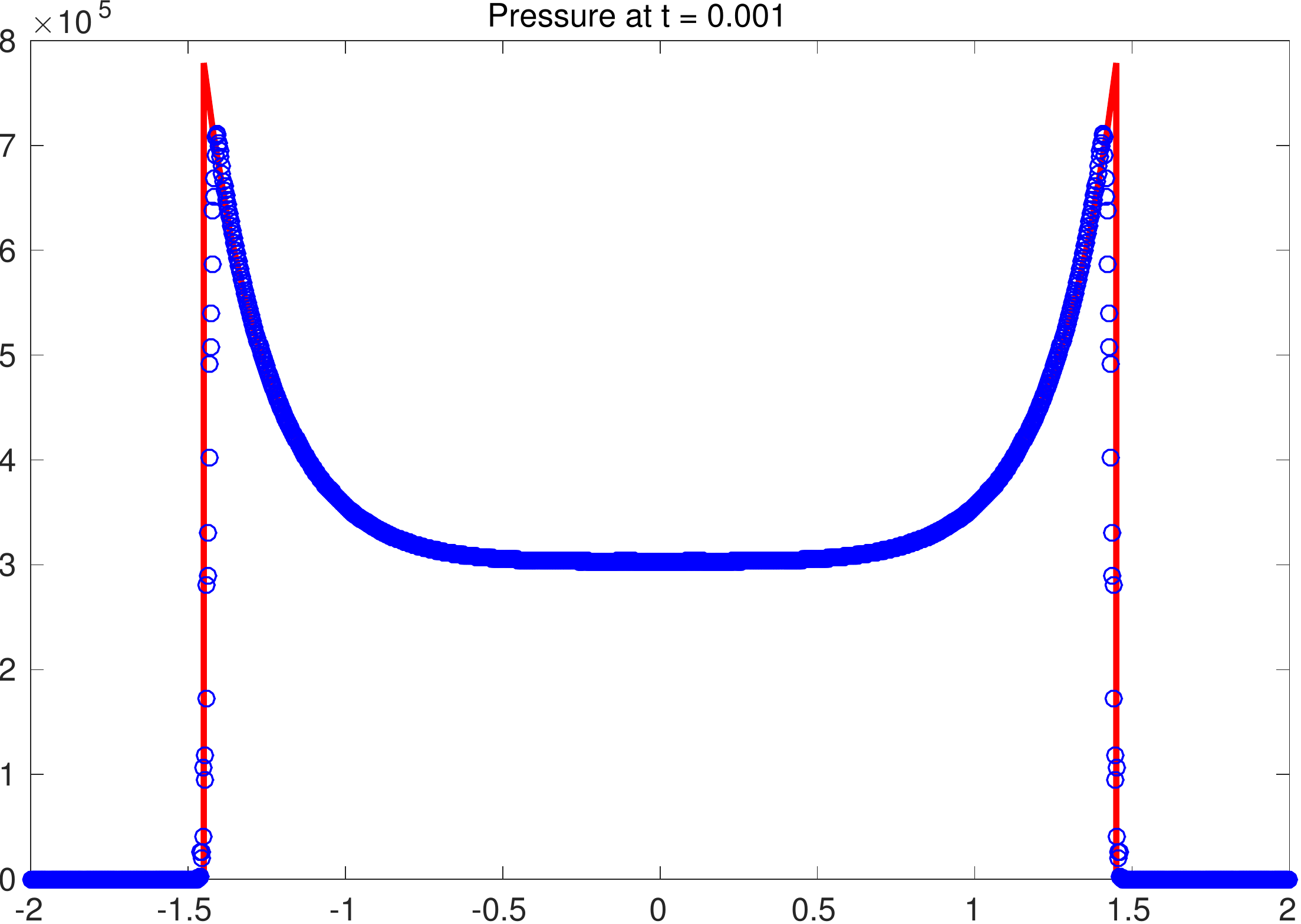}}
  \subfigure[]{\includegraphics[width=0.45 \linewidth]{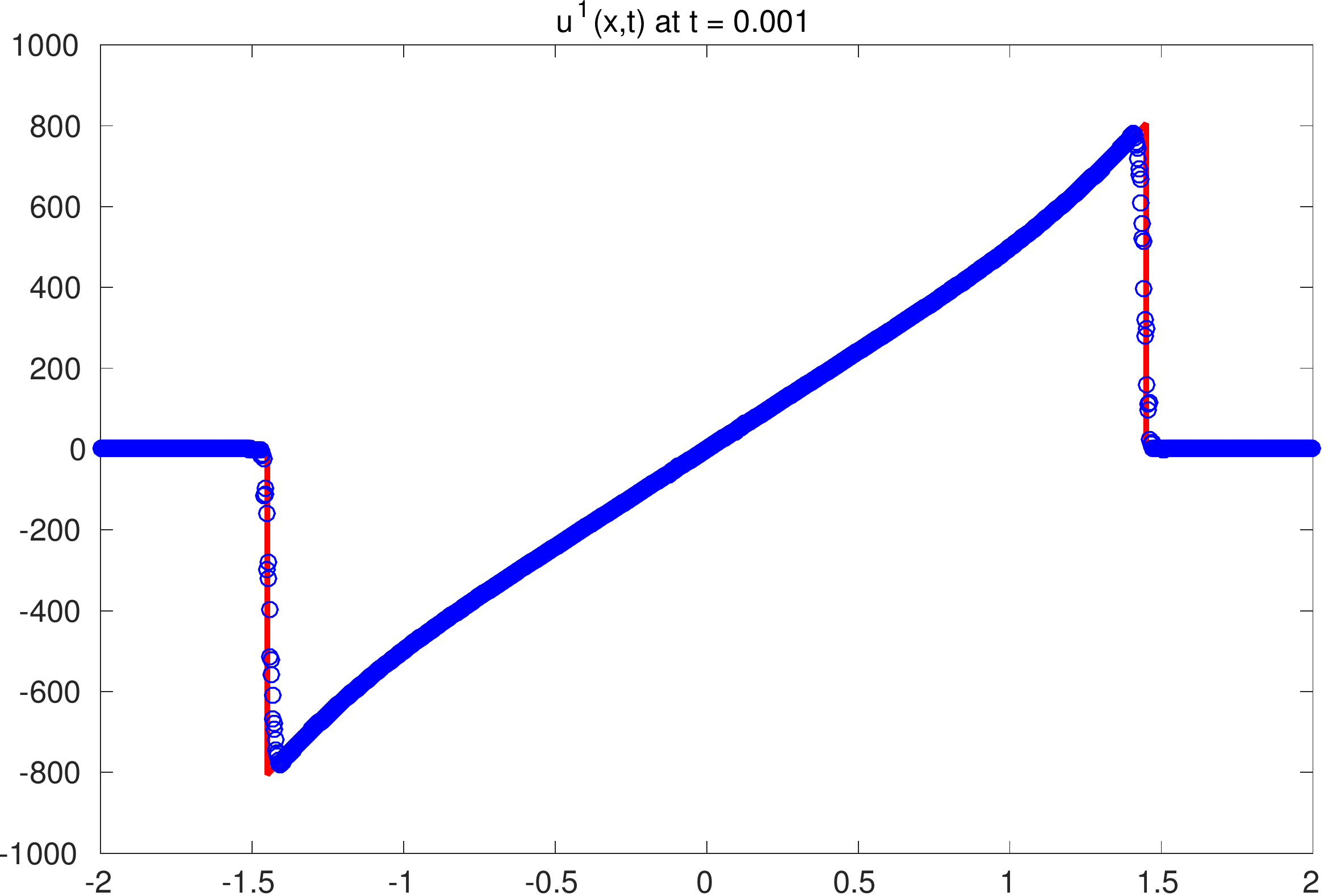}}
  \label{fig:1dsedov}
  \caption{Sedov Blast-Wave. This is another
  standard example that fails without the positivity limiting. The blue dots correspond to the computed numerical solution, and the red line corresponds to the exact
  solution.}
 \end{center}
 \end{figure}
 
\subsection{\it Two dimensional examples}

Here, we highlight the fact that this solver is able to operate on
both Cartesian and unstructured meshes.

\subsubsection{\it Convergence Results}
\label{subsubsec:ConvergenceTable}

We first verify the high-order accuracy of the proposed scheme.
For problems where the density and pressure are far away from zero, the
limiters proposed in this work ``turn off'', and therefore have no effect on
the solution.  In order to investigate the effect of this 
positivity preserving limiter, we simulate a
smooth problem where the solution has regions that are nearly zero. This is similar to
the smooth test case considered by \cite{article:ZhangShu10rectmesh}, \cite{seal2014explicit} and \cite{tang14}.
To this end, we consider initial conditions defined by
\begin{equation}
\begin{pmatrix}
   \rho_0 \\ u^1_0 \\ u^2_0 \\ p_0
  \end{pmatrix}({\bf x}) =
  \begin{pmatrix}
   1-0.9999\sin(2\pi x)\sin(2\pi y)\\ 1 \\ 0 \\ 1
  \end{pmatrix}
\end{equation}
on a computational domain of $[0,1]\times[0,1]$.
We integrate this problem up to a final time of $t=0.02$, and compute
$L^2$-norm errors against the exact solution given by 
\begin{equation}
 \begin{pmatrix}
       \rho \\ u^1 \\ u^2 \\ p
  \end{pmatrix}(t, {\bf x} ) =
  \begin{pmatrix}
       1-0.9999\sin(2\pi \left(x-t\right))\sin(2\pi y)\\ 1 \\ 0 \\ 1
  \end{pmatrix}.
  \label{eq:exactsolnconv}
\end{equation}

Results for 
Cartesian as well as unstructured meshes are presented in Table
\ref{table:convergence}.  These indicate that the high-order accuracy of the
method is not sacrificed when the limiters are turned on.
As a final note, we observe that it appears that in general much higher resolution is required on unstructured meshes
before the numerical results enter the asymptotic regime.

\begin{table}[ht]
\begin{center}
\hspace*{-50pt}\begin{tabular}{|r||c|c||c|c||c|c|}
\hline
\bf{\# Cartesian cells} & \bf{{Error}} & \bf{{Order}}& \bf{\# triangular cells} & \bf{{Error}}& \bf{{Order}}\\
\hline
\hline
$   784$ & $1.79\times 10^{-04}$ & --- & $20400$ & $1.03\times 10^{-05}$ & ---\\
\hline
$   1764$ & $7.08\times 10^{-06}$ & $7.966$ & $29280$ & $2.36\times 10^{-06}$ & $8.148$ \\
\hline
$  3969$ & $2.10\times 10^{-06}$ & $2.994$ & $42048$ & $2.43\times 10^{-07}$ & $12.560$ \\
\hline
$  8836$ & $6.22\times 10^{-07}$ & $3.045$ & $60550$ & $1.39\times 10^{-07}$ & $3.084$ \\
\hline
$  19881$ & $1.86\times 10^{-07}$ & $2.971$ & $86526$ & $8.11\times 10^{-08}$ & $2.999$ \\
\hline
$  44944$ & $5.48\times 10^{-08}$ & $3.000$ & $124998$ & $4.67\times 10^{-08}$ & $3.003$ \\
\hline
 --- & --- & --- & $179998$ & $2.70\times 10^{-08}$ & $3.013$ \\
\hline
\hline
\end{tabular}\hspace*{-50pt}

\caption{ 
    Convergence results for the 2D Euler problem in Section \ref{subsubsec:ConvergenceTable}. All errors are $L^2$ norm errors. We
    see that the positivity preserving limiter does not affect the asymptotic
    convergence rate of the method. Also note that this solution was run only to short time, as in \cite{article:ZhangShu10rectmesh}, \cite{seal2014explicit} and \cite{tang14}, because for this example we
    must resolve a positive, yet very small density leading to a very large wave-speed and thus a very small permissable time-step.} 
\label{table:convergence}

\end{center}
\end{table}

\subsubsection{\it Sedov blast on an unstructured mesh}
\label{ex:Sedov2d}

In this example we implement a two-dimensional version of the Sedov blast wave
on the circular domain 
\[
    \Omega = \left\{ (x,y) : x^2 + y^2 \leq 1.1 \right\}.
\]
The bulk of the domain begins with an undisturbed gas,
 $\vec{u} \equiv \vec{0}$, with uniform density $\rho \equiv 1$, and near-zero energy
$\E = 10^{-12}$.
Only the cells at the center of the domain contain a large amount of energy
that approximate a delta function.
To simulate this, we introduce a small region at the center of the domain that
is radially symmetric (because the simulation should be radially symmetric) of the
form
\[
    \E= \begin{cases}
        \frac{0.979264}{\pi r_d^2} & \sqrt{x^2+y^2}<r_d, \\
        10^{-12} & \text{otherwise},
     \end{cases}
\]
where $r_d = \sqrt{\frac{\pi 1.1^2}{\# \text{cells}}}$ is a characteristic length
of the mesh.


We present results in Figure \ref{fig:2dSedov} where we simulate our solution
with a total of $136270$ mesh cells. 


\begin{figure}[!ht]
\begin{center}
  \subfigure[]{\includegraphics[width=0.4 \linewidth]{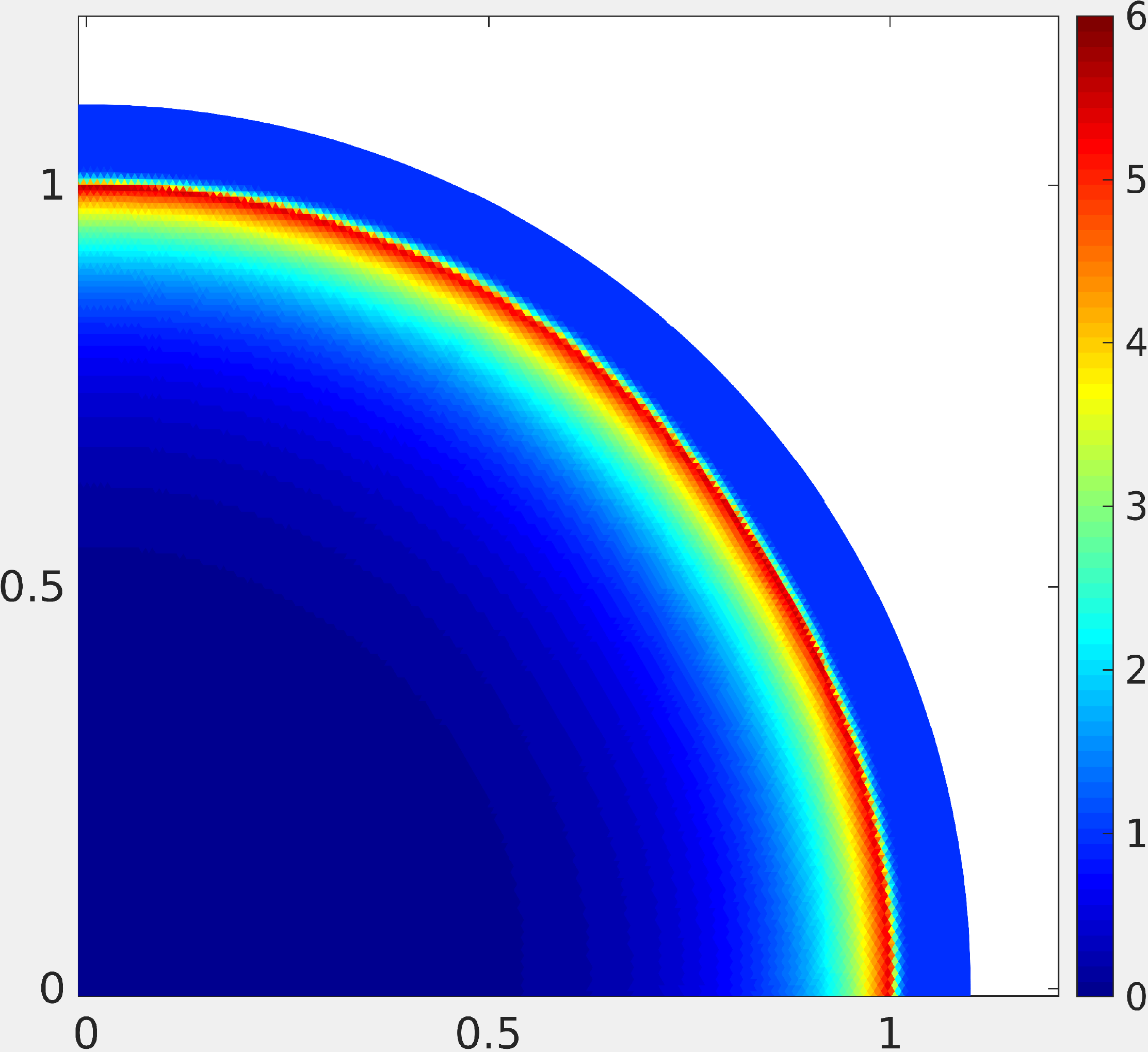}}\hspace{0.1 in}
  \subfigure[]{\includegraphics[width=0.53 \linewidth]{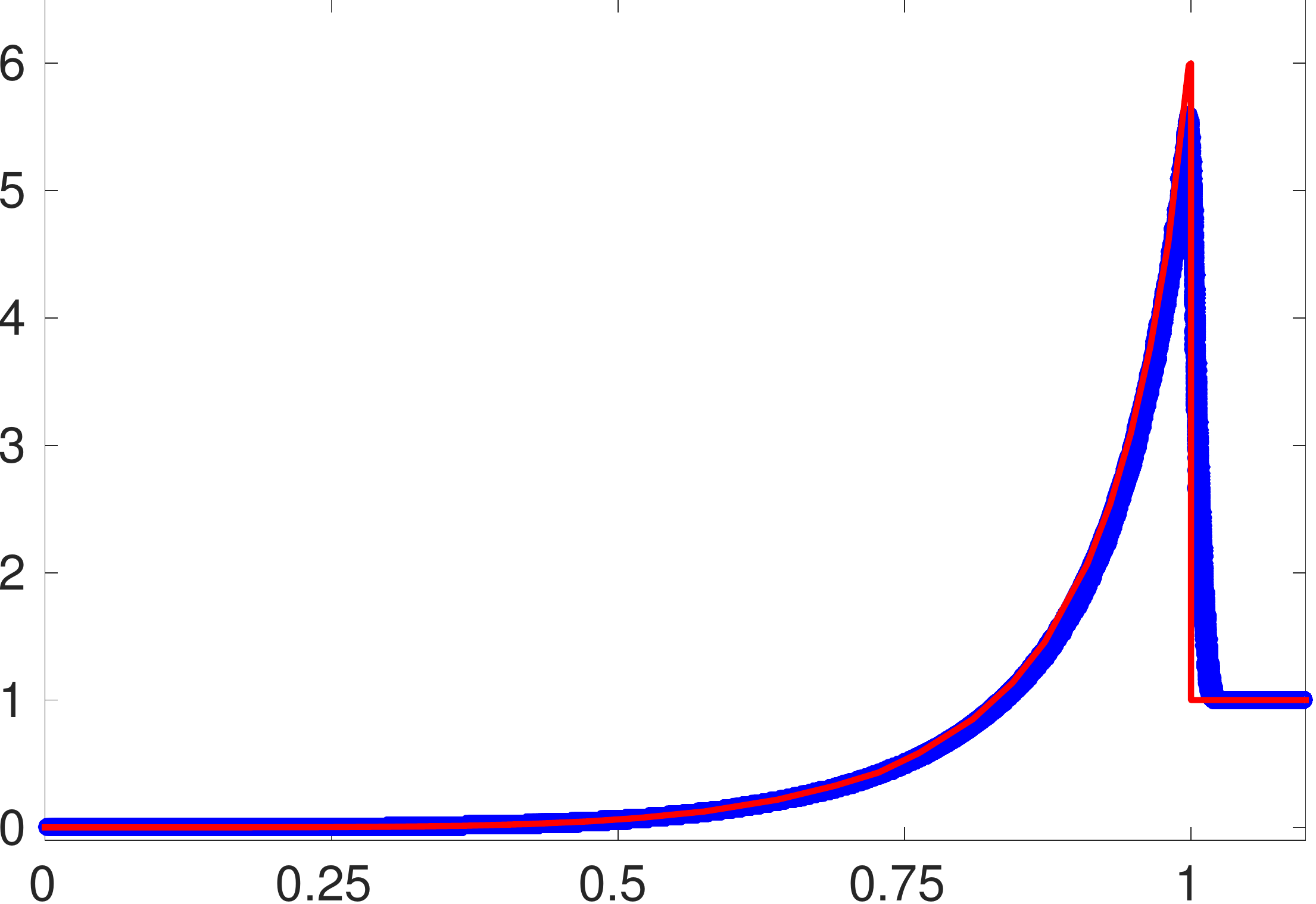}}
\caption{Sedov blast problem.  Here, we show density plots of the two
dimensional Sedov problem we introduce in \S\ref{ex:Sedov2d} .
The clear regions in the upper right part of subfigure (a) are due to the fact
that we only mesh the interior of the circular domain
$\left\{ (x,y) : x^2 + y^2 \leq 1.1 \right\}$.
Also note that we ran this problem on the entire
circular region but only plot the upper right region of the solution.
 \label{fig:2dSedov}
}
 \end{center}
 \end{figure}

\subsubsection{\it Shock-diffraction over a block step: Cartesian mesh}
\label{ex:ShockDiffStruct}

This is a common example used to test positivity limiters \cite{seal2014explicit,article:ZhangShu10rectmesh,ZhangShu11}. It involves a Mach
$5.09$ shock located above a step moving into air that is at rest with $\rho=1.4$ and $p=1.0$. The domain this problem is typically solved on
is $[0,1]\times [6,11] \cup [1,13]\times[0,11]$. The step is the region $[0,1] \times [0,6]$. Our boundary conditions are transparent everywhere
except above the step where they are inflow and on the surface of the step where we used solid wall. Our initial conditions have the shock located above
the step at $x=1$. The problem is typically run out to $t=2.3$, and if a positivity
limiter is not used the solution develops negative density and pressure
values, which causes the simulation to fail. The solution shown in Figure
\ref{fig:ShockDiffStruct} is run on a $390 \times 330$ Cartesian
mesh.


\begin{figure}[!ht]
\begin{center}
 \subfigure[density]{\includegraphics[width=0.48 \linewidth]{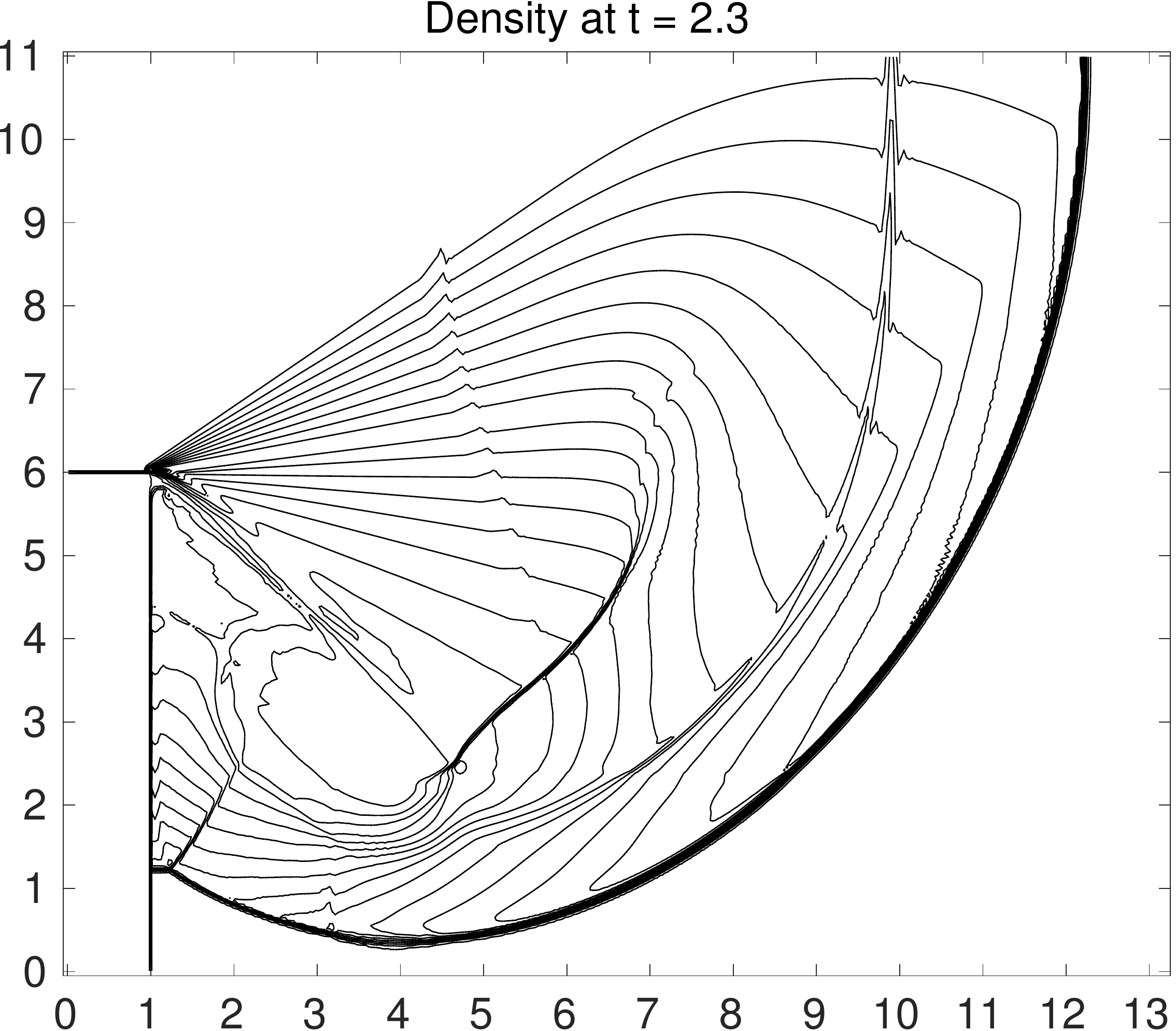}}
  \subfigure[]{\includegraphics[width=0.48 \linewidth]{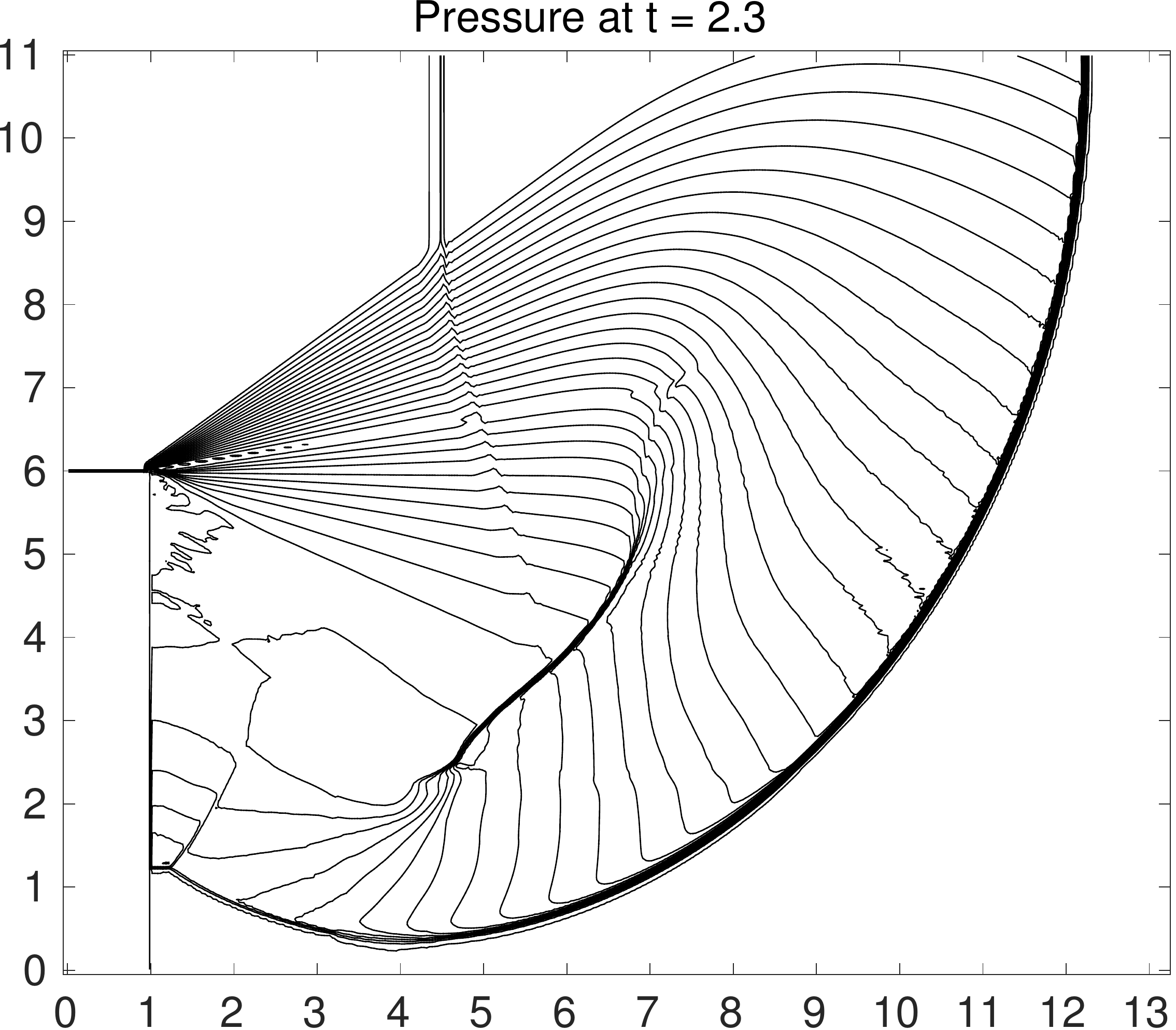}}
 \caption{The Mach $5.09$ shock-diffraction test problem on a
 $390\times 330$ Cartesian mesh.  
 For the density, we plot a total of $20$ equally spaced contour
 lines ranging from $\rho = 0.066227$ to $\rho = 7.0668$.  For the pressure, we plot a
 total of $40$ equally spaced contour
 lines ranging from $p = 0.091$ to $p = 37$ to match the figures in \cite{article:ZhangShu10rectmesh}.
 \label{fig:ShockDiffStruct}
 }
 \end{center}
\end{figure}

\subsubsection{\it Shock-diffraction over a block step: unstructured mesh}
\label{ex:ShockDiffUnstruct}

Next, we run the same problem from \S\ref{ex:ShockDiffStruct}, but we
discretize space using an unstructured triangular mesh with $126018$ cells. The results are
shown in Figure \ref{fig:ShockDiffUnstruct}, and indicate that the
unstructured solver behaves similarly to the Cartesian one.


\begin{figure}[!ht]
\begin{center}
 \subfigure[density]{\includegraphics[width=0.48 \linewidth]{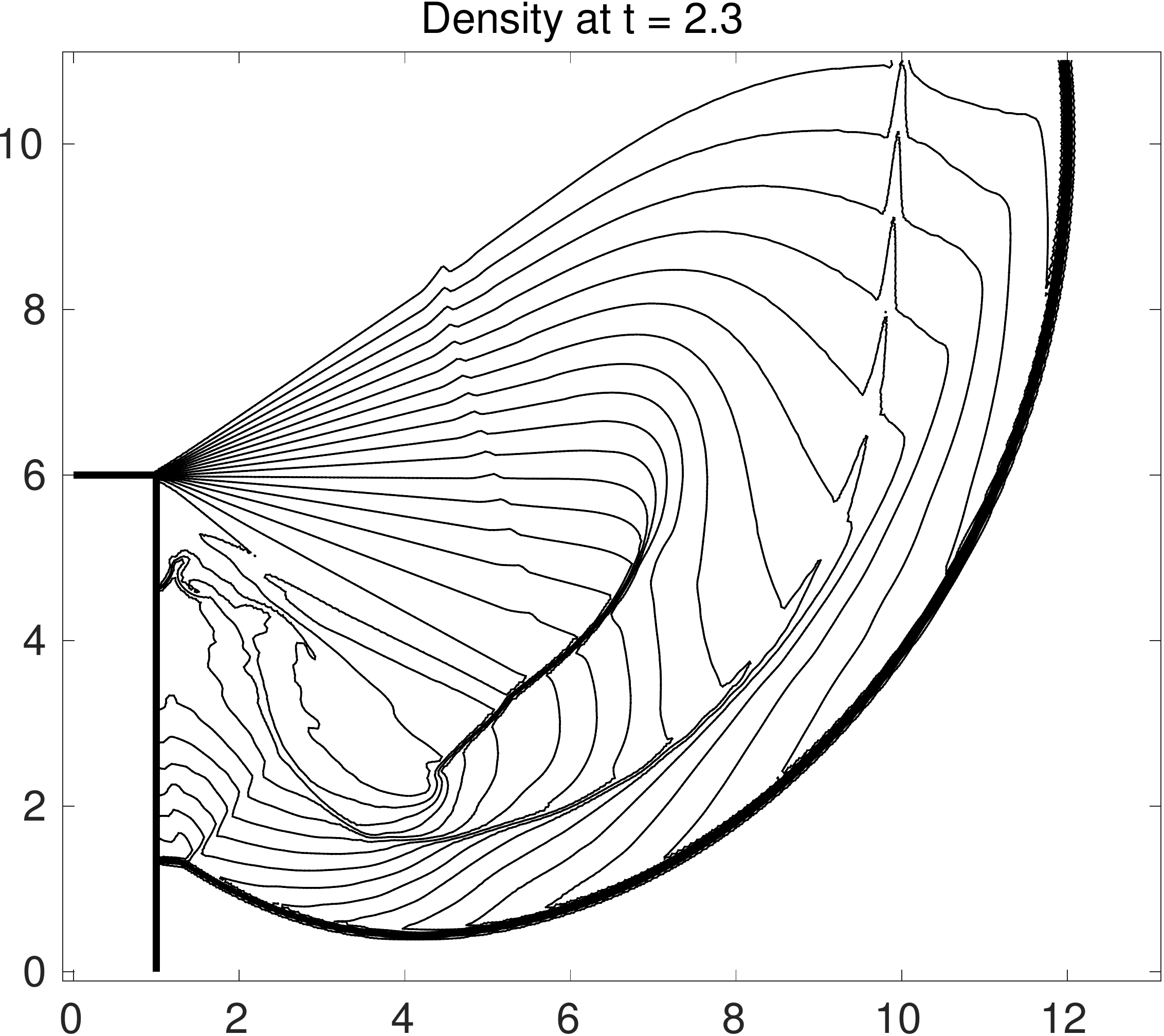}}
  \subfigure[]{\includegraphics[width=0.48 \linewidth]{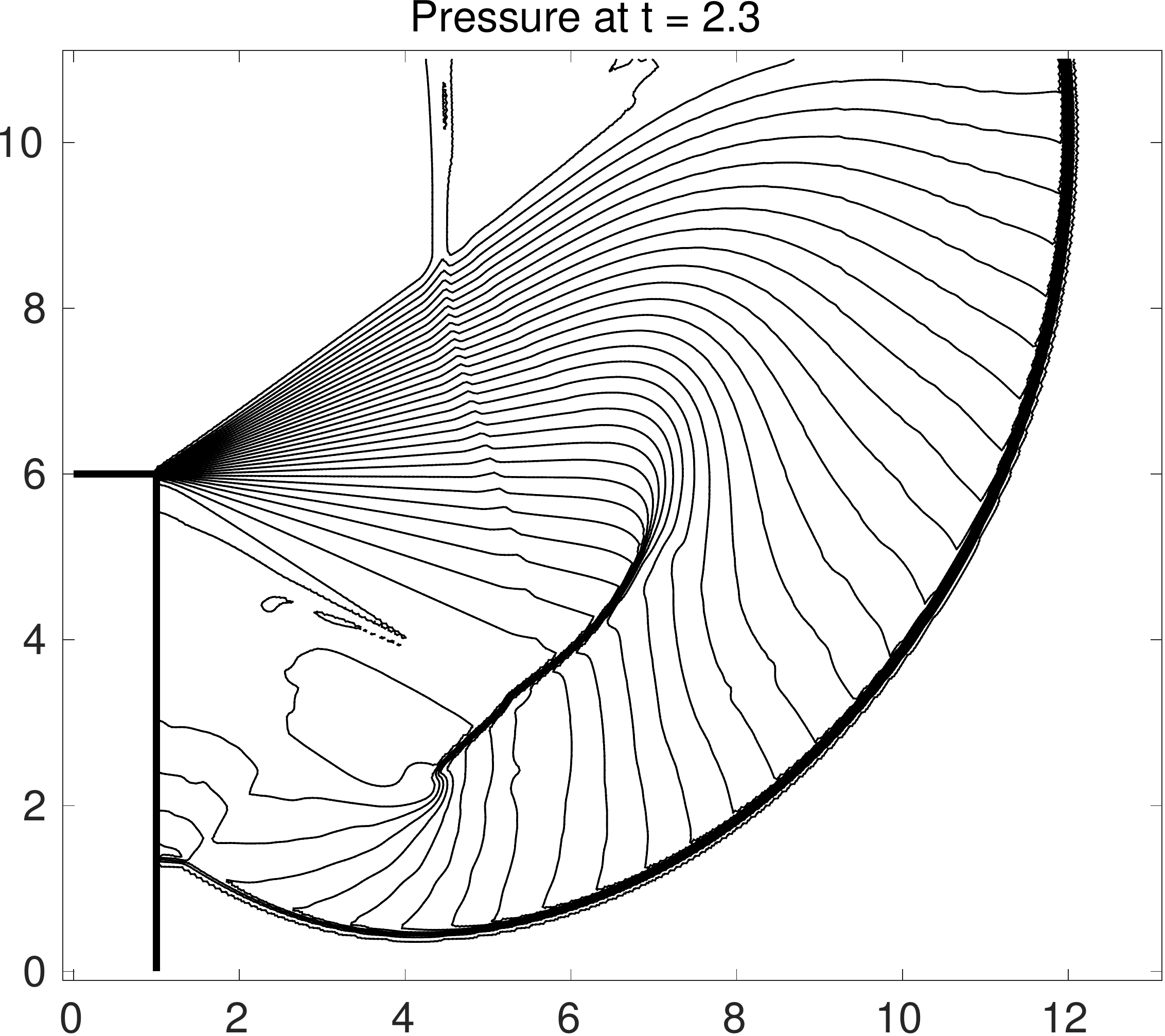}}
 \caption{The shock-diffraction test problem on an unstructured triangular mesh.
 For the density, we plot a total of $20$ equally spaced contour
 lines ranging from $\rho = 0.066227$ to $\rho = 7.0668$.  For the pressure, we plot a
 total of $40$ equally spaced contour
 lines ranging from $p = 0.091$ to $p = 37$, again to match the figures in \cite{article:ZhangShu10rectmesh}.
 \label{fig:ShockDiffUnstruct}
 }
 \end{center}
 \end{figure}

\subsubsection{\it Shock-diffraction over a $120$ degree wedge}
\label{ex:ShockDiffWedge}

This final shock-diffraction test problem is very similar to the previous test problems, however it must be run on an unstructured triangular
mesh because the wedge involved in this problem is triangular 
(with a $120$ degree angle)  \cite{ZhangShuTrimesh}.
Our domain for this problem is given by
\[
    [0,13]\times[0,11] \setminus [0,3.4] \cup [0,3.4] \times \left[\frac{6.0}{3.4}x, 6.0\right]. 
\]
Again the boundary conditions are transparent everywhere
except above the step where they are inflow and on the surface of the step
where they are reflective solid wall boundary conditions. In addition, the initial conditions for this problem
are also slightly different that those in the previous example.  Here, we have a Mach $10$ shock located above the step at $x=3.4$, and undisturbed air in the rest of the domain with
$\rho=1.4$ and $p=1.0$. This problem is run on an unstructured mesh with a
total of $122046$ cells. These results are presented in Figure \ref{fig:ShockDiffWedge}.

\begin{figure}[!ht]
\begin{center}
 \subfigure[]{\includegraphics[width=0.48 \linewidth]{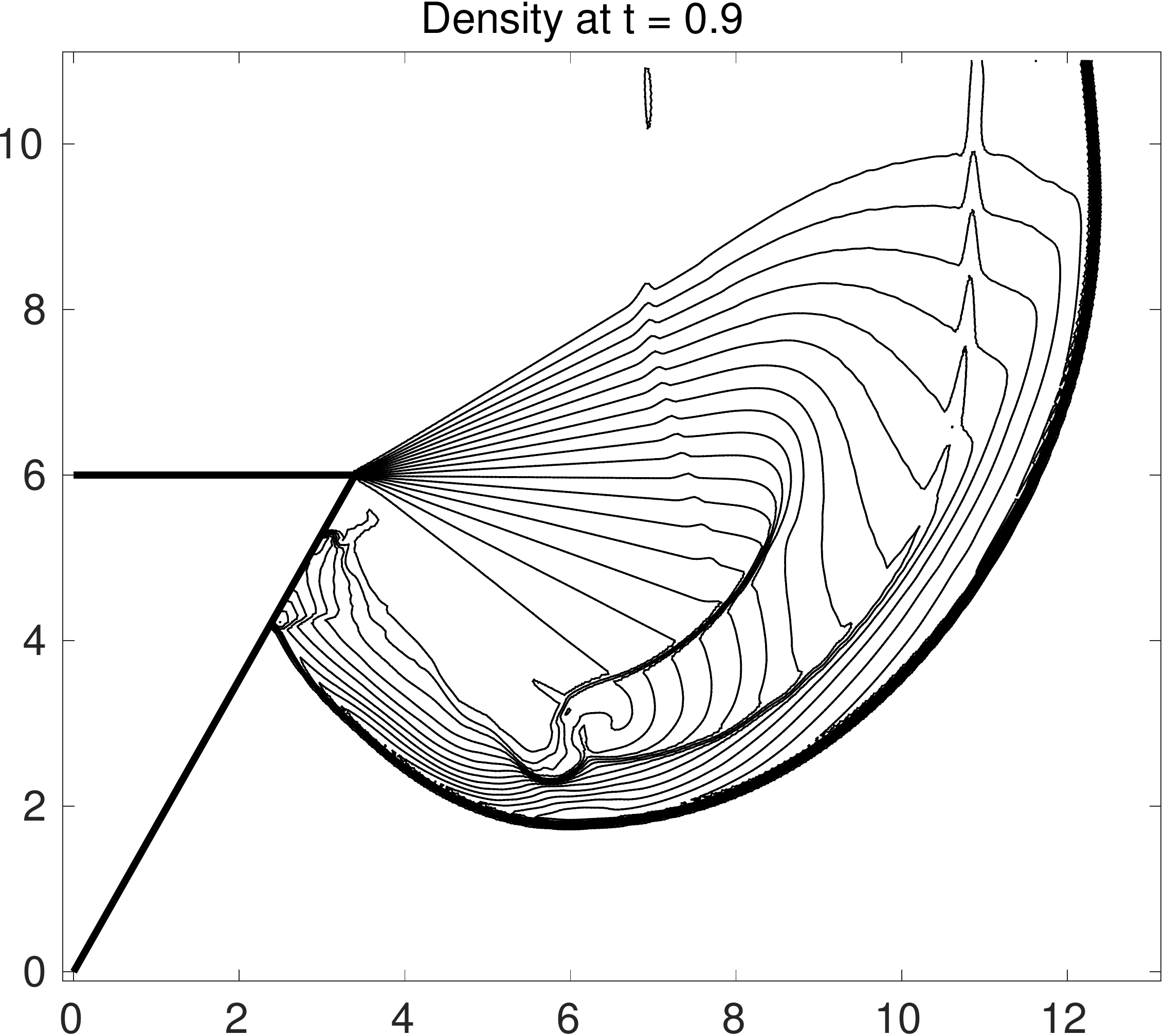}}
  \subfigure[]{\includegraphics[width=0.48 \linewidth]{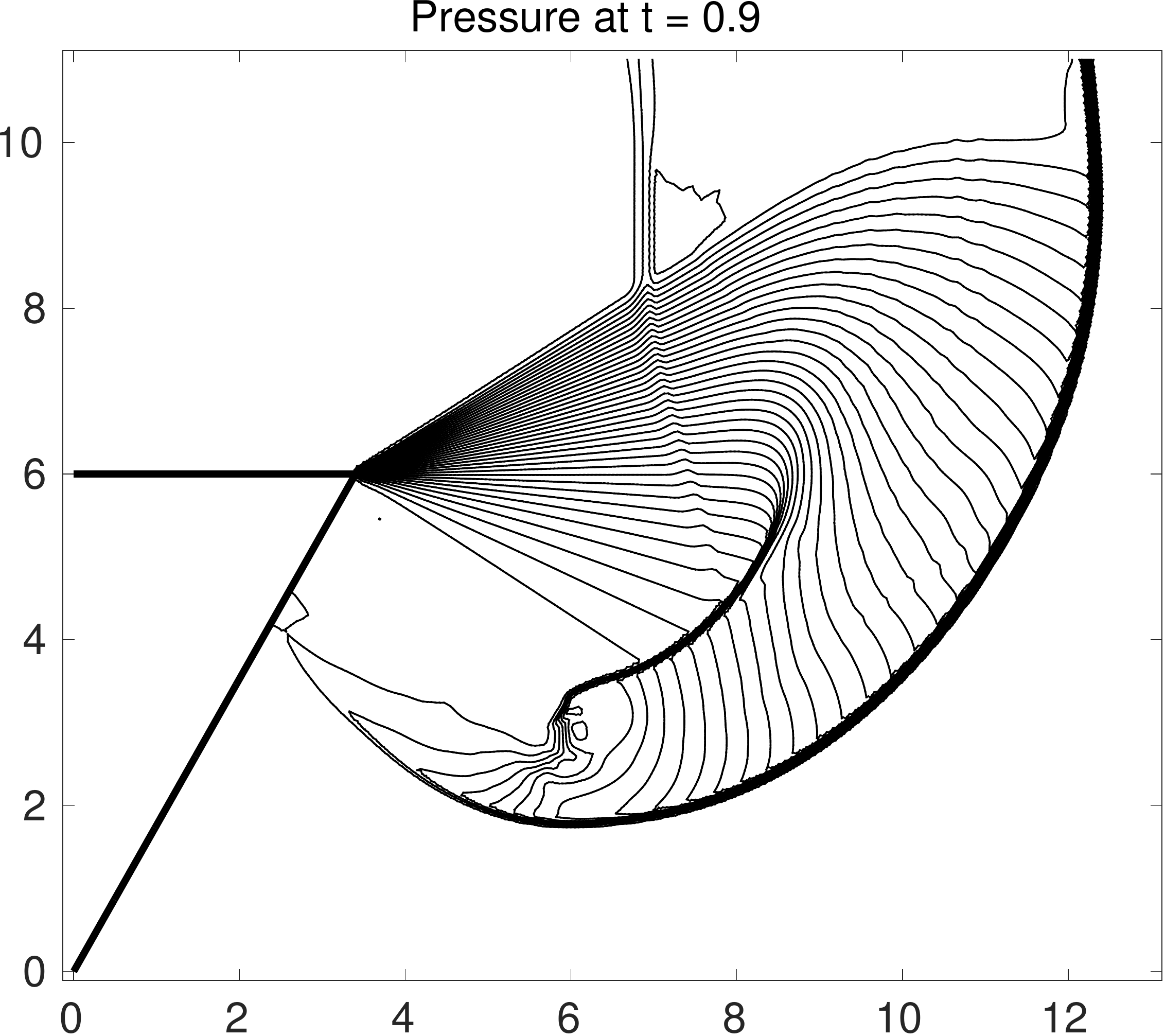}}
 \caption{Shock diffraction problem with a wedge.  Here, we present numerical results for the Mach $10$ shock diffraction problem, where the shock
 passes over a $120$ degree angular step.
 For the density, we plot a total of $20$ equally spaced contour
 lines ranging from $\rho = 0.0665$ to $\rho = 8.1$.  For the pressure, we plot a
 total of $40$ equally spaced contour
 lines ranging from $p = 0.5$ to $p = 118$.
 \label{fig:ShockDiffWedge}  
 }
\end{center}
\end{figure}
 

\section{Conclusions}
\label{sec:conclusions}

In this work we developed a novel positivity-preserving limiter for the Lax-Wendroff discontinuous Galerkin (LxW-DG) method.
Our results are high-order and applicable for unstructured meshes in multiple dimensions.  
Positivity of the solution is realized by leveraging two separate ideas: the
moment limiting work of Zhang and Shu \cite{ZhangShu11}, as well as the flux corrected
transport work of Xu and collaborators \cite{xu2013,liang2014parametrized,tang14,christlieb2015high,seal2014explicit}.  The additional shock
capturing limiter, which is required to obtain non-oscillatory results,
is the one recently developed by the current authors \cite{MoeRossSe15}.
Numerical results indicate the robustness of the method, and are promising for future applications to more complicated problems such as the ideal magnetohydrodynamics equations.
Future work includes introducing source terms to the solver, as well as pushing these methods to higher orders (e.g., $11^{\text{th}}$-order), but that requires either (a) an expedited way of computing
higher derivatives of the solution, or (b) rethinking how Runge-Kutta methods are applied in a modified flux framework (e.g., \cite{SeGuCh15}).

\bigskip

\noindent
{\bf Acknowledgements.}

\noindent
The work of SAM was supported in part by NSF grant DMS--1216732. 
The work of JAR was supported in part by NSF grant DMS--1419020.




\bibliographystyle{plain}                  

\begin{thebibliography}{10}

\bibitem{FCTPavel}
P.~Bochev, D.~Ridzal, G.~Scovazzi, and M.~Shashkov.
\newblock Formulation, analysis and numerical study of an optimization-based
  conservative interpolation (remap) of scalar fields for arbitrary
  {L}agrangian-{E}ulerian methods.
\newblock {\em J. Comput. Phys.}, 230(13):5199--5225, 2011.

\bibitem{book1981finite}
D.L. Book.
\newblock Finite-difference techniques for vectorized fluid dynamics
  calculations.
\newblock {\em New York and Berlin, Springer-Verlag, 1981. 233 p}, 1, 1981.

\bibitem{book1975flux}
D.L. Book, J.P. Boris, and K.~Hain.
\newblock Flux-corrected transport {II}: {G}eneralizations of the method.
\newblock {\em Journal of Computational Physics}, 18(3):248--283, 1975.

\bibitem{boris1973flux}
J.P. Boris and D.L. Book.
\newblock Flux-corrected transport. {I}. {SHASTA}, {A} fluid transport
  algorithm that works.
\newblock {\em Journal of computational physics}, 11(1):38--69, 1973.

\bibitem{boris1976flux}
J.P. Boris and D.L. Book.
\newblock Flux-corrected transport. {III}. {M}inimal-error {FCT} algorithms.
\newblock {\em Journal of Computational Physics}, 20(4):397--431, 1976.

\bibitem{ChFeSeTa2015}
A.J. Christlieb, X.~Feng, D.C. Seal, and Q.~Tang.
\newblock A high-order positivity-preserving single-stage single-step method
  for the ideal magnetohydrodynamic equations.
\newblock {\em arXiv preprint arXiv:1509.09208}, 2015.

\bibitem{SeGuCh15}
A.J. Christlieb, Y.~G{\"u}{\c{c}}l{\"u}, and D.C. Seal.
\newblock The {P}icard integral formulation of weighted essentially
  nonoscillatory schemes.
\newblock {\em SIAM J. Numer. Anal.}, 53(4):1833--1856, 2015.

\bibitem{christlieb2015high}
A.J. Christlieb, Y.~Liu, Q.~Tang, and Z.~Xu.
\newblock High order parametrized maximum-principle-preserving and
  positivity-preserving {WENO} schemes on unstructured meshes.
\newblock {\em J. Comput. Phys.}, 281:334--351, 2015.

\bibitem{tang14}
A.J. Christlieb, Y.~Liu, Q.~Tang, and Z.~Xu.
\newblock Positivity-preserving finite difference weighted {ENO} schemes with
  constrained transport for ideal magnetohydrodynamic equations.
\newblock {\em SIAM J. Sci. Comput.}, 37(4):A1825--A1845, 2015.

\bibitem{TVBRKDG4}
B.~Cockburn, S.~Hou, and C.-W. Shu.
\newblock The {R}unge-{K}utta local projection discontinuous {G}alerkin finite
  element method for conservation laws. {IV}. {T}he multidimensional case.
\newblock {\em Math. Comp.}, 54(190):545--581, 1990.

\bibitem{CoKarn}
B.~Cockburn, G.E. Karniadakis, and C.-W. Shu.
\newblock The development of discontinuous {G}alerkin methods.
\newblock In {\em Discontinuous {G}alerkin methods ({N}ewport, {RI}, 1999)},
  volume~11 of {\em Lect. Notes Comput. Sci. Eng.}, pages 3--50. Springer,
  Berlin, 2000.

\bibitem{TVBRKDG3}
B.~Cockburn, S.Y. Lin, and C.-W. Shu.
\newblock T{VB} {R}unge-{K}utta local projection discontinuous {G}alerkin
  finite element method for conservation laws. {III}. {O}ne-dimensional
  systems.
\newblock {\em J. Comput. Phys.}, 84(1):90--113, 1989.

\bibitem{TVBRKDG2}
B.~Cockburn and C.-W. Shu.
\newblock T{VB} {R}unge-{K}utta local projection discontinuous {G}alerkin
  finite element method for conservation laws. {II}. {G}eneral framework.
\newblock {\em Math. Comp.}, 52(186):411--435, 1989.

\bibitem{TVBRKDG5}
B.~Cockburn and C.-W. Shu.
\newblock The {R}unge-{K}utta discontinuous {G}alerkin method for conservation
  laws. {V}. {M}ultidimensional systems.
\newblock {\em J. Comput. Phys.}, 141(2):199--224, 1998.

\bibitem{CourantDifferences}
R.~Courant, E.~Isaacson, and M.~Rees.
\newblock On the solution of nonlinear hyperbolic differential equations by
  finite differences.
\newblock {\em Comm. Pure. Appl. Math.}, 5:243--255, 1952.

\bibitem{DuBaDiToMuDi08}
M.~Dumbser, D.S. Balsara, E.F. Toro, and C.-D. Munz.
\newblock A unified framework for the construction of one-step finite volume
  and discontinuous {G}alerkin schemes on unstructured meshes.
\newblock {\em J. Comput. Phys.}, 227(18):8209--8253, 2008.

\bibitem{dumbser2007arbitrary}
M.~Dumbser, M.~K\"aser, and E.F. Toro.
\newblock An arbitrary high-order discontinuous galerkin method for elastic
  waves on unstructured meshes-v. local time stepping and p-adaptivity.
\newblock {\em Geophysical Journal International}, 171(2):695--717, 2007.

\bibitem{dumbser2005ader}
M.~Dumbser and C.-D. Munz.
\newblock {ADER} discontinuous {G}alerkin schemes for aeroacoustics.
\newblock {\em Comptes Rendus M{\'e}canique}, 333(9):683--687, 2005.

\bibitem{DuMu06}
M.~Dumbser and C.-D. Munz.
\newblock Building blocks for arbitrary high order discontinuous {G}alerkin
  schemes.
\newblock {\em J. Sci. Comput.}, 27(1-3):215--230, 2006.

\bibitem{DuZaHiBa13}
M.~Dumbser, O.~Zanotti, A.~Hidalgo, and D.S. Balsara.
\newblock A{DER}-{WENO} finite volume schemes with space-time adaptive mesh
  refinement.
\newblock {\em J. Comput. Phys.}, 248:257--286, 2013.

\bibitem{Gassner11}
G.~Gassner, M.~Dumbser, F.~Hindenlang, and C.-D. Munz.
\newblock Explicit one-step time discretizations for discontinuous {G}alerkin
  and finite volume schemes based on local predictors.
\newblock {\em J. Comput. Phys.}, 230(11):4232--4247, 2011.

\bibitem{Godunov}
S.K. Godunov.
\newblock Difference method of computation of shock waves.
\newblock {\em Uspehi Mat. Nauk (N.S.)}, 12(1(73)):176--177, 1957.

\bibitem{GoShuTa01}
S.~Gottlieb, C.-W. Shu, and E.~Tadmor.
\newblock Strong stability-preserving high-order time discretization methods.
\newblock {\em SIAM Rev.}, 43(1):89--112 (electronic), 2001.

\bibitem{GuoQiuQiu14}
W.~Guo, J.-M. Qiu, and J.~Qiu.
\newblock A new {L}ax--{W}endroff discontinuous {G}alerkin method with
  superconvergence.
\newblock {\em J. Sci. Comput.}, 65(1):299--326, 2015.

\bibitem{harten1972self}
A.~Harten and G.~Zwas.
\newblock Self-adjusting hybrid schemes for shock computations.
\newblock {\em Journal of Computational Physics}, 9(3):568--583, 1972.

\bibitem{Kraa1991}
J.F.B.M. Kraaijevanger.
\newblock Contractivity of {R}unge-{K}utta methods.
\newblock {\em BIT}, 31(3):482--528, 1991.

\bibitem{kuzmin05}
D.~Kuzmin and R.~L\"ohner, editors.
\newblock {\em Flux-corrected transport}.
\newblock Scientific Computation. Springer-Verlag, Berlin, 2005.
\newblock Principles, algorithms, and applications.

\bibitem{LaxWendroff}
P.~Lax and B.~Wendroff.
\newblock Systems of conservation laws.
\newblock {\em Comm. Pure Appl. Math.}, 13:217--237, 1960.

\bibitem{Lax}
P.D. Lax.
\newblock Weak solutions of nonlinear hyperbolic equations and their numerical
  computation.
\newblock {\em Comm. Pure Appl. Math.}, 7:159--193, 1954.

\bibitem{liang2014parametrized}
C.~Liang and Z.~Xu.
\newblock Parametrized maximum principle preserving flux limiters for high
  order schemes solving multi-dimensional scalar hyperbolic conservation laws.
\newblock {\em J. Sci. Comput.}, 58(1):41--60, 2014.

\bibitem{MoeRossSe15}
S.A. Moe, J.A. Rossmanith, and D.C. Seal.
\newblock A simple and effective high-order shock-capturing limiter for
  discontinuous {G}alerkin methods.
\newblock {\em arXiv preprint arXiv:1507.03024v1}, 2015.

\bibitem{VNeumann}
J.~Von Neumann and R.D. Richtmyer.
\newblock A method for the numerical calculation of hydrodynamic shocks.
\newblock {\em J. Appl. Phys.}, 21:232--237, 1950.

\bibitem{PerthameShu}
B.~Perthame and C.-W. Shu.
\newblock On positivity preserving finite volume schemes for {E}uler equations.
\newblock {\em Numer. Math.}, 73(1):119--130, 1996.

\bibitem{QiuDumbserShu05}
J.~Qiu, M.~Dumbser, and C.-W. Shu.
\newblock The discontinuous {G}alerkin method with {L}ax-{W}endroff type time
  discretizations.
\newblock {\em Comput. Methods Appl. Mech. Eng.}, 194(42-44):4528--4543, 2005.

\bibitem{dogpack}
J.A. Rossmanith.
\newblock {\sc DoGPack} software, 2015.
\newblock Available from {\tt http://www.dogpack-code.org}.

\bibitem{RuuthSpiteri2002}
S.J. Ruuth and R.J. Spiteri.
\newblock Two barriers on strong-stability-preserving time discretization
  methods.
\newblock In {\em Proceedings of the {F}ifth {I}nternational {C}onference on
  {S}pectral and {H}igh {O}rder {M}ethods ({ICOSAHOM}-01) ({U}ppsala)},
  volume~17, pages 211--220, 2002.

\bibitem{FINESS}
D.C. Seal.
\newblock {\sc FINESS} software, 2015.
\newblock Available from \\ {\tt https://bitbucket.org/dseal/finess}.

\bibitem{SeGuCh14}
D.C. Seal, Y.~G{\"u}{\c{c}}l{\"u}, and A.J. Christlieb.
\newblock High-order multiderivative time integrators for hyperbolic
  conservation laws.
\newblock {\em J. Sci. Comput.}, 60(1):101--140, 2014.

\bibitem{seal2014explicit}
D.C. Seal, Q.~Tang, Z.~Xu, and A.J. Christlieb.
\newblock An explicit high-order single-stage single-step positivity-preserving
  finite difference {WENO} method for the compressible {E}uler equations.
\newblock {\em Journal of Scientific Computing}, pages 1--20, 2015.

\bibitem{sedov1993similarity}
L.I. Sedov.
\newblock {\em Similarity and dimensional methods in mechanics}.
\newblock Academic Press, New York-London, 1959.

\bibitem{SOD}
G.A. Sod.
\newblock A survey of several finite difference methods for systems of
  nonlinear hyperbolic conservation laws.
\newblock {\em J. Computational Phys.}, 27(1):1--31, 1978.

\bibitem{TaDuBaDiMu07}
A.~Taube, M.~Dumbser, D.S. Balsara, and C.-D. Munz.
\newblock Arbitrary high-order discontinuous {G}alerkin schemes for the
  magnetohydrodynamic equations.
\newblock {\em J. Sci. Comput.}, 30(3):441--464, 2007.

\bibitem{proceedings:TitarevToro02}
V.A. Titarev and E.F. Toro.
\newblock A{DER}: arbitrary high order {G}odunov approach.
\newblock In {\em Proceedings of the {F}ifth {I}nternational {C}onference on
  {S}pectral and {H}igh {O}rder {M}ethods ({ICOSAHOM}-01) ({U}ppsala)},
  volume~17, pages 609--618, 2002.

\bibitem{ullrich2014flux}
P.A. Ullrich and M.R. Norman.
\newblock The flux-form semi-{L}agrangian spectral element ({FF}--{SLSE})
  method for tracer transport.
\newblock {\em Quarterly Journal of the Royal Meteorological Society},
  140(680):1069--1085, 2014.

\bibitem{xu2013}
Z.~Xu.
\newblock Parametrized maximum principle preserving flux limiters for high
  order schemes solving hyperbolic conservation laws: {O}ne-dimensional scalar
  problem.
\newblock {\em Math. Comp.}, 83(289):2213--2238, 2014.

\bibitem{ZalesakStruct}
S.T. Zalesak.
\newblock The design of flux-corrected transport ({FCT}) algorithms for
  structured grids.
\newblock In {\em Flux-corrected transport}, Sci. Comput., pages 29--78.
  Springer, Berlin, 2005.

\bibitem{article:ZhangShu10rectmesh}
X.~Zhang and C.-W. Shu.
\newblock {On positivity preserving high order discontinuous Galerkin schemes
  for compressible Euler equations on rectangular meshes}.
\newblock {\em J. Comp. Phys.}, 229:8918---8934, 2010.

\bibitem{ZhangShu11}
X.~Zhang and C.-W. Shu.
\newblock Maximum-principle-satisfying and positivity-preserving high-order
  schemes for conservation laws: survey and new developments.
\newblock {\em Proc. R. Soc. A}, 467(2134):2752--2776, 2011.

\bibitem{ZhangShuTrimesh}
X.~Zhang, Y.~Xia, and C.-W. Shu.
\newblock Maximum-principle-satisfying and positivity-preserving high order
  discontinuous {G}alerkin schemes for conservation laws on triangular meshes.
\newblock {\em J. Sci. Comput.}, 50(1):29--62, 2012.

\bibitem{zheng2006non}
H.~Zheng, Z.~Zhang, and E.~Liu.
\newblock Non-linear seismic wave propagation in anisotropic media using the
  flux-corrected transport technique.
\newblock {\em Geophysical Journal International}, 165(3):943--956, 2006.

\end{thebibliography}



\end{document}